\newcommand{\Gate}[2]{\BeginBox\State \pbox{\textwidth}{#1}\ \ \pbox{\textwidth}{#2}\EndBox}
\newcommand{\Register}[2]{{\em #1:} #2}
\let\mod\relax
\DeclareMathOperator{\mod}{mod}
\newcommand{\figcommented}[1]{}
\def \beq {\begin {eqnarray}}
\def \eeq {\end {eqnarray}}
\def \ba {\begin {eqnarray*}}
\def \ea {\end  {eqnarray*}}
\newcommand{\be}{\begin{equation}}
\newcommand{\ee}{\end{equation}}
\numberwithin{equation}{section}
\newtheorem{theorem}{Theorem}
\newtheorem{lemma}{Lemma}[section]
\newtheorem{proposition}[lemma]{Proposition}
\theoremstyle{definition}
\newtheorem{problem}{Problem}
\theoremstyle{remark}
\newcommand{\ignore}[1]{}
\newcommand{\barr}{\begin{array}}
\newcommand{\earr}{\end{array}}
\def\bfo{\begin{eqnarray*}}
\def\efo{\end{eqnarray*}}
\def\ba{\begin{eqnarray*}}
\def\ea{\end{eqnarray*}}
\def\beq{\begin{eqnarray}}
\def\eeq{\end{eqnarray}}
\renewcommand{\H}{{\mathbb H}}
\def\ket{\rangle}
\def\picture #1 by #2 (#3){
   \vsquare to #2{
     \hrule width #1 height 0pt depth 0pt
     \hfill
     \special{picture #3}}}
\def\scaledpicture #1 by #2 (#3 scaled #4){{
   \dimen0=#1 \dimen1=#2
   \divide\dimen0 by 1000 \multiply\dimen0 by #4
   \divide\dimen1 by 1000 \multiply\dimen1 by #4
   \picture \dimen0 by \dimen1 (#3 scaled #4)}}
\def \C {{\mathbb {C}}}
\def \be {e}
\def \H2s {H^{s+1}_0(\partial M\times [0,T/2])}
\def \pa0 {\partial _0}
\def \mbeq {\begin {eqnarray}}
\def \meeq {\end {eqnarray}}
\def \be {{ {e}}}
\title[Quantum computing algorithms for inverse problems] {Quantum computing algorithms for inverse problems on graphs and an NP-complete inverse problem}
\author[Ilmavirta, Lassas,  Lu,  Oksanen, Ylinen]{Joonas Ilmavirta, Matti Lassas, Jinpeng Lu, Lauri Oksanen, Lauri Ylinen}
\address{}\curraddr{}
\begin{document}
	\date{\today}

\maketitle

\tableofcontents 

{\bf Abstract.} {\it We consider an inverse problem for a finite graph $(X,E)$
where we are given a subset of vertices $B\subset X$ and the distances $d_{(X,E)}(b_1,b_2)$  of all vertices $b_1,b_2\in B$. The distance of points $x_1,x_2\in X$ is defined as the minimal number of edges needed to connect two vertices, so all edges have length 1. The inverse  problem is a discrete version of the boundary rigidity problem in Riemannian geometry or the inverse travel time problem in geophysics. We will show that this problem has unique solution under certain conditions and develop  quantum computing methods to solve it. We prove the following uniqueness result:  when $(X,E)$ is a tree and $B$ is the set of leaves of the tree, the graph $(X,E)$ can be uniquely determined in the class of  all graphs having a fixed number of vertices.  We present a quantum computing algorithm which produces a graph $(X,E)$, or one of those, which has a given number of vertices and the required distances between vertices in $B$. To this end we develop an  algorithm that takes in a qubit representation of a graph and combine it with Grover's search algorithm. The algorithm can be implemented using only $O(|X|^2)$ qubits, the same order as the number of elements in the adjacency matrix of $(X,E)$. It also has a quadratic improvement in computational cost compared to standard classical algorithms.  Finally, we consider applications in theory of computation, and show that a slight modification of the above inverse problem is NP-complete: all NP-problems can be reduced to a discrete inverse problem we consider.}

\section{Introduction}

We consider inverse travel time problems for graphs that are analogous
to the boundary rigidity problems studied in Riemannian geometry or the inverse travel time problems studied in seismic imaging. 
The boundary rigidity problem asks if a Riemannian manifold with boundary can be determined by the knowledge of the distances between boundary points.
The problem was conjectured \cite{MR636880} to be uniquely solvable for simple manifolds (i.e., with convex boundary and no conjugate points), and proved in two dimensions in \cite{PU05}. In higher dimensions, boundary rigidity is known for simple conformal metrics \cite{MR78,Croke91}, 
generic simple manifolds including all analytic ones \cite{SU05}, metrics close to Euclidean or hyperbolic \cite{Gromov,CDS,LSU,BI,BI13}, and for manifolds foliated by strictly convex hypersurfaces \cite{SUV16,SUV21}.
See also the surveys \cite{Ivanov10,Survey_Uhlmann,SUVZ,PSU}.

The boundary rigidity problem originally arose from geophysics as the inverse travel time problem or the inverse kinematic problem \cite{H1907,W1910}, where the goal is to reconstruct the interior structure of the body from the travel time measurements of waves on its boundary.
In seismic imaging, the speeds of seismic waves define a metric structure, and the distance between points is the travel time of the waves between the points.

We are interested in a discrete version of the inverse travel time problem. This type of problem has applications in the determination of phylogenetic tree of species in biology \cite{SS,Fel03,DBP} (a branching diagram or a tree showing the evolutionary relationships among various species based upon similarities in their physical or genetic characteristics), in chemistry 
\cite{Wiener,EJS76,BCM,ZT} (notably the Wiener index), and in tomography of networks such as internet tomography \cite{CGGS,CCLNY,CK13}.
See also the surveys \cite{AH,Survey_index}.
The problem is also related to inverse problems for random walks \cite{WSBT,MPL,BILL,BILL2} and a discrete version of a Calder{\'o}n type problem \cite{GR}.
In particular, the random walk measurements used in \cite{BILL2} contain more data than the distance data considered in the present paper.
Related to these applications, we prove a uniqueness result showing that
a tree with $n$ vertices can be uniquely reconstructed from the leaf-to-leaf distances in the class of all connected graphs with $n$ vertices.
Furthermore, in this paper we demonstrate how quantum computing algorithms can be used to solve discrete inverse problems. We will also show that a certain generalized inverse travel time problem for graphs is an NP-complete problem.

\subsection{Formulation of the inverse problems for graphs} \label{subsection-formulation}

Let $(X,E)$ be a finite undirected simple graph with $B\subset X$  being the set of ``observation nodes'', where $X$ is the set of vertices and $E$ is the set of edges.
We denote this by $(X,E,B)$.
Recall that a graph is said to be simple if there is at most one edge between any pair of vertices and there is no edge that connects a vertex to itself.
For undirected simple graphs,
edges are two-element subsets of $X$.
Two vertices $x,y\in X$ are called adjacent if $\{x,y\}\in E$, i.e., there is an edge between $x$ and $y$.
Two graphs are said to be \emph{isomorphic} if there is an edge-preserving bijection, i.e., \emph{graph isomorphism}, between the sets of vertices of the graphs.
The \emph{graph distance} function on a graph $(X,E)$, denoted by $d_{(X,E)}$ or $d_E$ for short,
is defined to be the minimal number of edges in paths that connect two vertices.
In particular, any pair of adjacent vertices has distance $1$. If there is no path connecting vertices $x$ and $y$, then we set $d_E(x,y) = \infty$.
Then a graph can be considered as a metric space equipped with the graph distance.
Two graphs are isomorphic if and only if they are isometric as metric spaces.

\medskip 

\begin{problem}[Inverse travel time problem for graphs]
    \label{prob:inverse-travel-time}
    Given an integer $n\in\mathbb{N}=\{0,1,2,\dots\}$, a finite set $B$ and a function $d_0:B\times B\to \mathbb N$, find a graph $(X,E)$ such that $|X|=n$ (that is, the graph has $n$~vertices), $B\subset X$ and that the graph distance $d_E|_{B\times B}:B\times B\to \mathbb N$ is the function $d_0$.
\end{problem}

In particular, we are interested in whether a solution to the problem, if it exists, is unique up to a graph isomorphism.
Note that we consider this problem within the class of graphs with fixed total number of vertices.
This is because without fixing the number of vertices the solution to the problem is never unique, as one can add isolated vertices to the graph without changing the graph distance at all. 
If all solutions  $(X,E)$  to the above inverse problem are isometric, we say
that the graph  $(X,E,B)$ is \emph{$B$-observation rigid} in the class of graphs with $n$ vertices.

\smallskip
We also consider a generalized version of the above problem. In the generalization we may specify that certain edges \emph{must} appear in the solution, and that certain edges \emph{must not} appear in the solution; we also allow the boundary distance function to be only partially defined on the boundary nodes:

\begin{problem}[Restricted inverse travel time problem for graphs]
    \label{prob:restricted-inverse-travel-time}
    Given a finite set $X$, sets $V\subset X\times X$ and $E_0\subset E_1\subset \{\{x_1,x_2\} : {x_1,x_2}\in X\}$, and a function $d_{0,V}:V\to \mathbb N$, find a set of edges $E$ such that $E_0\subset E\subset E_1$ and that the restriction $d_E|_V:V\to \mathbb N$ of the graph distance function of $(X,E)$ coincides with $d_{0,V}.$
\end{problem}

\section{Rigidity results for trees}

In general, one cannot expect a solution to the inverse travel time problem for graphs, if it exists, to be unique even up to a graph isomorphism. A simple counterexample would be a triangle and a path of three vertices, with $B$ being a pair of adjacent vertices.
Strong assumptions on $B$ or $d_0$ are likely necessary to make the solution unique.

As a first step in solving this uniqueness, or rigidity, problem, we consider the case where $d_0$ is induced by a tree metric on the set $B$ of vertices of degree $1$ (that is,
$B$ and $d_0:B\times B\to \mathbb N$ are such that there exists a tree $(X_0,E_0)$ containing $n$ vertices, $B\subset X_0$ is the set of leaves of $X_0$ and $d_0(b_1,b_2)=
d_{(X_0,E_0)}(b_1,b_2)$ for all $b_1,b_2\in B$).
Recall that a tree is a connected graph without cycles, and a vertex of degree $1$ in a tree is called a \emph{leaf}.
The degree of a vertex $v$ in a graph $(X,E)$ is the number of edges incident to $v$, denoted by $\hbox{deg}_{(X,E)} (v)$ or $\hbox{deg}_E (v)$ for short.
In this case, we will prove that the inverse travel time problem with such $B,d_0$ is uniquely solvable in the class of graphs with fixed total number of vertices.
We emphasize that we do not \emph{a priori} assume that the unknown graph $(X,E)$ is a tree.
We formulate this rigidity result as follows.

For $m\leq n$, consider the following class of finite connected undirected simple graphs $(X,E,B)$ with a subset $B\subset X$:
\begin{equation}\label{graph-class}
\begin{split}
\mathcal{G}(n,m)
&=
\big\{(X,E,B): |X|=n, \, |B|=m,\, \text{and}
\\&\quad
\textrm{deg}_E (b)=1 \textrm{ for all }b\in B \big\}.
\end{split}
\end{equation}
Here $|\cdot|$ denotes the cardinality.
Note that also vertices in $X\setminus B$ may have degree $1$.

\begin{theorem}[Boundary rigidity for trees]\label{tree-rigidity}
Let $T=(\bar X, \bar E, \bar B) \in \mathcal{G}(n,m)$ be a tree, where $\bar B$ is the set of all leaves of the tree. For any $G=(X,E,B)\in \mathcal{G}(n,m)$, if there exists an identification of $B$ and $\bar B$ such that $d|_{B\times B}=\bar d|_{\bar B\times \bar B}$ where $d,\bar d$ denote the graph distance functions on $G,T$, then $G$ is isomorphic to $T$.
\end{theorem}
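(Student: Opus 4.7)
The plan is to argue by strong induction on $n = |X|$, with base cases $n \leq 3$ handled by inspection (a connected graph on two vertices is an edge; a connected $3$-vertex graph with two leaves must be a path). For the inductive step on $n \geq 4$, the strategy is to identify a structural feature of $T$ that is forced to appear in $G$ as well, thereby enabling a one-vertex reduction and an appeal to the inductive hypothesis.

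The cleanest case is when $T$ has a \emph{cherry} --- two leaves $\bar b_1, \bar b_2 \in \bar B$ with $\bar d(\bar b_1, \bar b_2) = 2$, sharing a common neighbor $\bar w$ in $T$. Matching of distances forces $d(b_1, b_2) = 2$ in $G$, and since $b_1, b_2$ have degree $1$, they share a unique common neighbor $w \in X \setminus B$. For $n \geq 4$, connectedness of $T$ gives $\deg_T(\bar w) \geq 3$ (the three-vertex path being the only smaller cherry configuration), so removing $\bar b_1$ from $T$ yields a tree $T' \in \mathcal{G}(n-1, m-1)$ whose full leaf set is still $\bar B \setminus \{\bar b_1\}$; removing $b_1$ from $G$ yields $G' \in \mathcal{G}(n-1, m-1)$ with the same leaf distances, since shortest paths between other leaves never pass through a degree-$1$ vertex. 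The inductive hypothesis gives $G' \cong T'$ via an isomorphism that sends $b_2 \mapsto \bar b_2$ and therefore $w \mapsto \bar w$; this extends to $G \cong T$ by $b_1 \mapsto \bar b_1$.

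The main obstacle is the no-cherry case, where all distinct pairs of leaves of $T$ lie at distance $\geq 3$. Here one may encounter trees in which \emph{every} leaf $\bar b$ sits at the end of a pendant path $\bar b - \bar w_1 - \cdots - \bar w_p - \bar u$ with $\deg_T(\bar w_i) = 2$ and $\deg_T(\bar u) \geq 3$ (the subdivided star is a prototypical example). The reduction I would use here is to trim the entire pendant path, giving $T'' \in \mathcal{G}(n-p, m-1)$, which requires showing that $G$ carries a matching pendant path at the corresponding leaf $b$: each $w_i$ has degree exactly $2$ in $G$ and the end vertex $u$ has degree $\geq 3$. The key tool is a fingerprint identity $(d(w_i, b'))_{b' \in B} = (\bar d(\bar w_i, \bar b'))_{\bar b' \in \bar B}$, derived iteratively from the observation that any shortest path out of a degree-$1$ vertex is forced through its unique neighbor; any extra neighbor of some $w_i$ would provide an alternative shortest route from $b$ to some leaf $b^*$, and propagating this alternative to $b^*$ must eventually shortcut some leaf-to-leaf distance, contradicting the matching of leaf metrics. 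This distance analysis is the principal technical difficulty of the proof.

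A cleaner symmetric organization is to split the proof into two steps: first show that $G$ is a tree, then invoke the classical fact (Buneman's four-point condition) that a leaf-labeled tree is determined up to isomorphism by its leaf metric together with its vertex count. The tree step reduces to showing that once one has a spanning subtree $T_G \cong T$ of $G$, any additional edge $\{u, v\} \in E \setminus E(T_G)$ lies between two non-leaf vertices (leaves have degree $1$) and creates a cycle whose shortcut shortens some leaf-to-leaf distance --- the two witnessing leaves exist because each of the two components of $T_G$ obtained by removing the first edge on the $T_G$-path between $u$ and $v$ contains a leaf of $G$. The induction sketched above is exactly what builds such a spanning subtree inside $G$, so in either framing the real work lies in the pendant-path analysis of the no-cherry case.
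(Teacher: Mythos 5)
Your cherry case is sound (modulo strengthening the induction hypothesis to ``isomorphic via an isomorphism extending the leaf identification,'' which you implicitly use when you send $b_2\mapsto\bar b_2$ and hence $w\mapsto\bar w$), and your final observation that adding an edge to a spanning tree shortens some leaf-to-leaf distance matches the last step of the paper's proof. The genuine gap is exactly where you flag ``the principal technical difficulty'': the no-cherry case. The claim that ``any extra neighbor of some $w_i$ would provide an alternative shortest route from $b$ to some leaf $b^*$'' is false as stated. Graph distances between leaves are completely insensitive to attaching pendant material: if $w_1$ carries an extra pendant vertex $z$ (or a pendant subtree with no vertex of $B$), then $z$ provides no shortcut to any leaf, the fingerprint identity $(d(w_i,b'))_{b'}=(\bar d(\bar w_i,\bar b'))_{b'}$ survives intact, and no leaf-to-leaf distance is perturbed. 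The only thing that rules such configurations out is the global budget $|X|=n$: the extra vertex at $w_1$ must be ``paid for'' by a deficit elsewhere in $G$, which then forces some leaf-to-leaf distance to drop below its target. Your local propagation argument gives no mechanism for converting the vertex count into the degree claims $\deg_G(w_i)=2$, so the pendant-path trimming step is not justified and the induction does not close.

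This is precisely the point where the paper introduces its key lemma (Proposition \ref{tree-monotone}): a minimal-filling-type monotonicity statement asserting that if a tree's leaf distances dominate those of a target tree $T$ under a leaf bijection, then it has at least as many vertices as $T$, with equality if and only if the trees are isomorphic. That proposition is proved by induction on the number of \emph{leaves} (not vertices), using a Gromov-product maximality argument to locate a pendant path $T(p)$ with $|T(p)|\ge|T(\bar p)|$ that can be trimmed --- i.e., it solves in the tree category the very trimming problem you are trying to solve inside an arbitrary graph $G$. The theorem then follows by applying this to a spanning tree $T_G$ of $G$ (whose leaf distances automatically dominate $\bar d$), so the counting argument is done once, globally, where it belongs. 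If you want to salvage your outline, you should replace the fingerprint propagation by such a quantitative statement: it is the inequality $|X_1|\ge|X_2|$ with a characterized equality case, not any purely metric local rigidity, that carries the proof.
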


The proof of Theorem \ref{tree-rigidity} is given in Section~\ref{subsection-rigidity}.
We remark that in the literature, the set of leaves is often chosen to act as the "boundary" for a tree for various purposes, and Theorem \ref{tree-rigidity} can be formulated in short that \emph{trees are boundary rigid in $\mathcal{G}(n,m)$ for fixed $n,m$}. The key point is that we do not \emph{a priori} assume $G$ is a tree: we allow $G$ to be in the class of all connected simple graphs. If one \emph{a priori} knows $G$ is also a tree, the result reduces to (the uniqueness part of) the classical Tree-metric Theorem (e.g. \cite{Buneman71,Buneman74,HY,S69,Z65}).

Trees can be seen as the combinatorial analogue to simple manifolds, in the sense that the shortest path between any pair of vertices in trees is unique. 
Note that the meaning of "simple" is different for graphs and manifolds.
In fact, trees have even stronger property that any pair of vertices in a tree can be connected by a unique path 
that does not repeat vertices.

\section{Quantum algorithm for the inverse travel time problem}

The basic unit of information in quantum computing is the \emph{qubit}---a two-level quantum mechanical system. The two levels of the qubit are mathematically represented by two orthonormal vectors denoted by ${|0\rangle,|1\rangle}\in\C^2$; a general state of the qubit is a unit vector in $\C^2$. A \emph{quantum register} is a system comprising multiple qubits; the state of a quantum register consisting of $N$ qubits is described by a unit vector in the $N$-fold tensor product $(\C^2)^{\otimes N}$. To every classical bit string $x\in\{0,1\}^N$ there corresponds the quantum state
\begin{equation}
    \label{eq:computational-state}
    |x\rangle 
    = |x_1x_2\cdots x_N\rangle
    =|x_1\rangle\otimes|x_2\rangle\otimes\dots\otimes|x_N\rangle \in (\C^2)^{\otimes N};
\end{equation}
states of the form~\eqref{eq:computational-state} form the \emph{computational basis} of $(\C^2)^{\otimes N}$.

Quantum computation consists of a sequence of elementary operations applied to a quantum register, accompanied by a measurement in the end. The elementary operations are represented by \emph{quantum gates}, which are unitary operators on $(\C^2)^{\otimes N}$ that act on a small number of qubits. The result of the measurement is a classical bit string; if the state of the quantum register immediately before the measurement is $|\psi\rangle$, then a bit string  $x\in\{0,1\}^N$ is observed in the measurement with probability $| \langle x | \psi\rangle |^2$. 

A fixed sequence of quantum gates (optionally followed by a measurement) that is used as a part of an algorithm describing quantum computation is called a \emph{quantum subroutine}. A sequence of quantum gates and a measurement together constitute a \emph{quantum circuit}. For basic concepts related to quantum algorithms, we refer to the paper \cite{Nannicini}. For a treatise, see~\cite{MR1796805}.

\subsection{Grover's algorithm}
\label{sec:grover}
\newcommand{\kkket}[1]{|#1\rangle} 
\newcommand{\bbbra}[1]{\langle #1|}
\newcommand{\bbbraket}[2]{\langle#1|#2\rangle}

Grover's algorithm~\cite{MR1427516} is an algorithm that solves a quantum version of following unstructured search problem: Let $N$ be a positive integer and $\{T, F\}$ be a partition of the set $\{0,1\}^N$.
This means that $\{0,1\}^N = T \cup F$ and $T\cap F =\emptyset$, not excluding the possibility that $T=\emptyset$ or $F=\emptyset$.
The elements of $T$ are called solutions to the search problem. For each $x\in\{0,1\}^N$ it is possible to query a function for whether $x\in T$ or $x\notin T$. The function is viewed as a black box and it is called an \emph{oracle}. The problem is to find a solution $x\in T$ (if there are any) by querying the oracle as few times as possible. In the worst case finding a solution requires querying all possibilities, so the classical query complexity (the minimum number of queries to find a solution in the worst case) of the problem is $2^N$.

In the quantum version of the problem the oracle is replaced by a quantum oracle which may be queried by arguments that are in a superposition. More precisely, a \emph{quantum oracle} (corresponding to a set $T$) is a quantum subroutine that maps a computational basis state of the form $\kkket{x}\otimes\kkket{r}\otimes\kkket{0}_k$ (where $x\in\{0,1\}^N$, $r\in\{0,1\}$ and $\kkket{0}_k=\kkket{0}\otimes\dots\otimes\kkket{0}\in(\C^2)^{\otimes k}$ are $k$ extra qubits) as
\begin{equation}
  \label{eq:oracle-operator}
  \kkket{x}\otimes\kkket{r}\otimes\kkket{0}_k \mapsto 
  \kkket{x}\otimes\kkket{r + f(x)\mod 2}\otimes \kkket{0}_k,
\end{equation}
where $f(x)=1$ if $x\in T$, $f(x)=0$ otherwise.  In other words, the quantum oracle uses the qubit $\kkket{r}$ to mark whether $x\in T$ or $x\notin T$. Here the extra qubits $\kkket{0}_k$ are called \emph{ancillae} and they are used as a working memory. Grover's algorithm is a method that finds a solution (if such exists) in this quantum setting with high probability with only $O(2^{N/2})$ queries to the quantum oracle. 

In Section~\ref{sec:quantum-oracle} below we will construct a quantum oracle for the inverse travel time problem for graphs (Problem~\ref{prob:inverse-travel-time}). After that, in Section~\ref{sec:grover-for-inverse-problem} we will explain how Grover's algorithm together with the oracle constructed in Section~\ref{sec:quantum-oracle} can be used to find a solution to Problem~\ref{prob:inverse-travel-time}. The standard version of Grover's algorithm assumes that the number of solutions, $|T|$, is known; this is not true in the case of Problem~\ref{prob:inverse-travel-time}. For the convenience of the reader, in Appendix~\ref{app: Grover} we recall (with proof) how Grover's algorithm can be applied in the case of an unknown number of solutions.

\subsection{Quantum oracle for the inverse travel time problem} 
\label{sec:quantum-oracle}

We will consider a quantum algorithm where the adjacency matrix of the graph $(X,E)$ is represented by a sequence of qubits. 
Our algorithm computes if the distances between the vertices in the subset $B\subset X$ satisfy the required conditions given by the data $d_0$ in Problem~\ref{prob:inverse-travel-time}.

To transform the inverse travel time problem for graphs to a computational problem, we enumerate the elements of $X$ as $\{1,2,\dots,n\}$ and the elements of $B$
as $\{1,2,\dots,m\}$.
Let us consider a binary-valued function $e:\{1,2,\dots,n\}^2\to\{0,1\}$
that satisfies $e(j,j)=0$ and $e(j,k)=e(k,j)$. Such a function specifies a graph
$(X,E)$, defined by 
$$
\{x_j,x_k\}\in E \iff e(j,k)=1 .
$$
Our aim is to find values of $e(j,k)$ for which the corresponding graphs 
$(X,E)$ with a subset $B\subset X$ satisfy $d_E|_{B\times B}=d_0$.

Without loss of generality, we assume that the data $d_0$ and $n$ given in Problem~\ref{prob:inverse-travel-time} satisfy $d_0(j,j)=0$ for all $j\in B$, $d_0(j,k)=d_0(k,j)$ and $d_0(j,k)\ge 1$ for all ${j,k}\in B$, $j\neq k$, and also  $\max_{{j,k}\in B} d_0(j,k) \le n-1$. This is possible without loss of generality, as these conditions can easily be checked by a fast classical algorithm, and they clearly are necessary for the existence of a solution. In our implementation of the algorithm, we assume that $|B|=m \ge 4$.

Before introducing the quantum algorithm, we consider two classical algorithms, Algorithm~\ref{alg_paths} and Algorithm~\ref{alg_classicaltest} below, which test if a given graph is a solution to the inverse
travel time problem for graphs, and then consider the analogous quantum algorithms.
In the classical algorithms we consider variables  having values in $\{0,1\}$ (i.e., classical bits), or equivalently, TRUE/FALSE.

Algorithm~\ref{alg_paths} takes in as an input the vector $\vec e=(e(i,j))_{1\le i<j\le n}$,
where $e(i,j)\in \{0,1\}$ (in fact, in the algorithm we consider the extended variable $e(i,j)$, defined for all $i\not = j$, that satisfy $e(j,i)=e(i,j)$),
and computes distances from a specific node $o\in B$.
The distances
are computed using a graph $(X,E)$
that corresponds to the adjacency matrix $\vec e$.

Algorithm~\ref{alg_classicaltest} tests if the distances in the graph $(X,E)$
corresponding to the adjacency matrix $\vec e$ satisfy
the conditions given in the data $d_0$ for the inverse problem, that is, whether $d_E(i,j)=d_0(i,j)$,
for all $i,j\in B$.

In Algorithm~\ref{alg_paths}, we compute, for a fixed node $o \in B$,
the truth values $p(d,j)$, $d = 1,\dots, |X| - 1$, $j \in X \setminus \{o\}$, such that
\begin{equation}\tag{P}
\label{eq:P}
\begin{split}
  &p(d,j) = 1 \text{ if~and~only~if there is a path of length $\le d$}\\
  &\text{between $o$ and $j$.}  
\end{split}
\end{equation}
\newpage

\begin{algorithm}[H]
\caption{Compute shortest paths from $o \in B$}\label{alg_paths}
\begin{algorithmic}
\Require adjacency matrix $e(j,k)$, $j, k \in X$ and $o\in B$
\Ensure $p(d,j) \in \{0, 1\}$, $d = 1,\dots, n - 1$, $j \in X \setminus \{o\}$, satisfies (P)
\ForAll{$j \in X \setminus \{o\}$}
    \If{$e(o,j)=1$}
        \State $p(1,j) \gets 1$
    \Else
        \State $p(1,j) \gets 0$
    \EndIf
\EndFor
\For{$d = 2,\dots,n - 1$}
    \ForAll{$j \in X \setminus \{o\}$}
        \ForAll{$k \in X \setminus \{o,j\}$}
            \If{$p(d-1, k) = 1$ and $e(k, j) = 1$}
                \State $a(k) \gets 1$
            \Else
                \State $a(k) \gets 0$
            \EndIf
        \EndFor 
        \If{$p(d-1,j) = 1$ or $a(k) = 1$ for some $k \in X \setminus \{o,j\}$}
            \State $p(d,j) \gets 1$
        \Else
            \State $p(d,j) \gets 0$
        \EndIf
    \EndFor 
\EndFor 
\end{algorithmic}
\end{algorithm}
\newpage

\begin{algorithm}[H]
\caption{Test if a graph $(X,E)$  has the correct $B$-distances}\label{alg_classicaltest} 
\begin{algorithmic}
\Require adjacency matrix $e(j,k)$, $j, k \in X$, and  numbers $d_0(j,k)$, $j,k \in  B $
\Ensure $d_E(j,k)=d_0(j,k)$ for $j,k \in  B $
\ForAll{$o \in B$}
       \\   
$\hbox{\ }$\quad {\bf Call} Algorithm~\ref{alg_paths} {\bf with} \\
$\hbox{\ }$\quad \quad\quad Input:   $e(j,k)$, $j, k \in X$ and $o\in B$\\
$\hbox{\ }$\quad \quad\quad Output:  $p(d,j)$, $d=1,\dots,n-1$, $j\in X\setminus \{o\}$   
        \ForAll{$j \in B \setminus \{o\}$}
              \State $d \gets d_0(o,j)$
            \If{$d=1$ and $p(1,j)=1$}
                \State $s(j) \gets 1$
            \ElsIf{$d\ge 2$ and $p(d-1, j) = 0$ and $p(d, j)  = 1$}
                \State $s(j) \gets 1$
            \Else
                \State $s(j) \gets 0$
            \EndIf
            \EndFor 
                 \State $t(o) \gets  s(1) \,\&\, s(2) \,\&\, \cdots \,\&\, s(o-1) \,\&\, s(o+1) \,\&\, \cdots \,\&\, s(m)$
        \EndFor 
           \State $r \gets  t(1)\, \& \,t(2)\, \& \,\cdots \, \& \, t(m)$

\Output r
\end{algorithmic}
\end{algorithm}

Let us next introduce some tools that will allow writing quantum versions of our algorithms.
We will denote quantum gates and quantum subroutines by boxes containing the name of the quantum gate or subroutine and the qubits on which it operates. The qubits are listed in three categories:
``Controls'', ``Targets'', and ``Ancillae''. If the quantum register is in a computational basis state before the quantum gate/subroutine is applied, then the values of the control qubits are not changed, whereas the values of the target qubits may change. The ancilla qubits are assumed to be in the state $|0\rangle$ at the beginning of the 
subroutine, during the execution of the subroutine their values may change, but their values are returned to the original value $|0\rangle$ by uncomputation at the end of the subroutine. The controls, targets, and ancillae correspond to input, output, and working memory of a classical algorithm, respectively.

Below, we denote the classical $\{0,1\}$-valued bit by lower case letter, such as $c_1$ and qubits by capital letter, such as $C_1$.

A basic example of a gate is the double controlled NOT gate, CCNOT, also called the Toffoli gate.
We write this gate as 
\begin{algorithmic}
\Gate{CCNOT}{
\Register{Controls}{$C_1$ and $C_2$}\\
\Register{Target}{$T$}
}
\end{algorithmic}
The CCNOT gate has two control qubits $C_1$ and $C_2$, one target qubit $T$ and no ancillae. When viewed as a Boolean function the CCNOT gate is given by
    \ba
(c_1, c_2, t) \mapsto (c_1, c_2, (t + c_1 \cdot c_2) \mod 2),\quad c_1, c_2, t\in \{0,1\}.
    \ea
As a quantum gate, CCNOT maps a computational basis state $|C_1 C_2 T\rangle$ to the computational basis state $|C_1\rangle\otimes|C_2\rangle\otimes|(T + C_1 \cdot C_2) \mod 2\rangle$.

The NOT gate acts on a single qubit as the unitary operator
    \begin{eqnarray*}
X |0\rangle = |1\rangle, \quad 
X |1\rangle = |0\rangle.
    \end{eqnarray*}
For the NOT gate, there is no reason to label the qubit and we write simply

\begin{centering}
\begin{algorithmic}
\Gate{NOT}{$T$
}
\end{algorithmic}
\end{centering}
for the NOT gate operating on the qubit $T$.

The Hadamard gate $H$ also acts on a single qubit, its action is specified by
\begin{equation*}
    H|0\rangle = \frac{1}{\sqrt{2}} (|0\rangle + |1\rangle),\quad
    H|1\rangle = \frac{1}{\sqrt{2}} (|0\rangle - |1\rangle).
\end{equation*}

The operation performed by an arbitrary quantum gate $G$ is invertible, we denote the gate that performs the inverse operation by $REVERSE-G$, and we use similar notation for quantum subroutines. Even though CCNOT gate is its own inverse, we sometimes
write $REVERSE-CCNOT=CCNOT$ to emphasize that we do uncomputing steps.

As simple examples, we describe in Appendix~\ref{appendix:iterated-and-or} how  the OR gate with two qubits can be implemented in terms of the AND and NOT gates, 
and how to implement the quantum iterated AND operator and OR operator that correspond to the classical operators
\ba
& &\hbox{AND${}_{(m)}$}(c_1,c_2,\dots,c_m)=c_1\,\hbox{AND}\,c_2\,\hbox{AND}\,\dotsb \,\hbox{AND}\, c_m,\\& &
\hbox{OR${}_{(m)}$}(c_1,c_2,\dots,c_m)=c_1\,\hbox{OR}\,c_2\,\hbox{OR}\,\dotsb \,\hbox{OR}\, c_m.
\ea

\subsubsection{Pseudocodes of quantum algorithms}

Next, we consider quantum versions of Algorithms~\ref{alg_paths} and~\ref{alg_classicaltest} above.
Let $(X,E)$ be a graph, and $B\subset X$ be a subset where distances are observed.
As before, we identify the set $X$ of $n$ vertices by $\{1,2,\dots,n\}$, and the set $B\subset X$ of $m$ vertices by $\{1,2,\dots,m\}$.
We consider qubits $E(j,k)$, $j, k \in X$, that have the state $ |1\rangle$ if $j,k$ are adjacent (i.e., $\{j,k\}\in E$) in the graph $(X,E)$, and have the state $ |0\rangle$ if $j,k$ are not adjacent (i.e., $\{j,k\}\notin E$). We always assume that $j\not =k$,
and when the set $ X$ is identified with $\{1,2,\dots,n\}$, these qubits are indexed as
 $E(j,k)$, where $1\le j<k\le n$. When we refer in the algorithms below to a qubit 
  $E(j,k)$ where $j>k$, we mean the qubit   $E(k,j)$.

We start with procedures that our main algorithm ORACLE uses.
Later, Grover's algorithm calls the ORACLE as a subroutine.

The subroutine ORACLE operates on the following qubits:

$E(j,k)$ for $1\le j< k\le n$ that encodes an adjacency matrix in $X\times X$;

\smallskip

$R$ is the target qubit.
\medskip

Moreover, ORACLE uses the following ancilla qubits having initial state $|0\rangle$:

\smallskip
$P(d,j)$ for $d = 1,\dots, n - 1$, $j=1,\dots, n$;
\smallskip

$A(k)$ for $k=1,\dots, n$;
\smallskip

$F(k)$ for $k =1,\dots, n$;

\smallskip

$T(o)$ for $o =1,\dots, m$.

\medskip

Next, we give pseudocodes of the subroutines for the ORACLE.
We give these in very precise form to show how ancilla qubits 
are used so that the total number of qubits used is $O(n^2)$.

The following subroutine UPDATE takes in the current 
value of the distance parameter $d$ and the number
$o\in B$ of the starting vertex from which we consider  the
distances to the other vertices $x\in X\setminus \{o\}$.
The subroutine UPDATE operates on the qubits $E(j,k)$ corresponding to the adjacency matrix and the qubits $P(d-1,j)$ and $P(d,j)$, where $j \in X \setminus \{o\}$. Assuming that the qubits $P(d-1,j)$ satisfy a quantum version of Condition~\eqref{eq:P} for the distances computed from the node $o$, UPDATE changes the values of the qubits $P(d,j)$, $j \in X \setminus \{o\}$, so that also they satisfy the same condition.
As the operator OR${}_{(n)}$ can be realized by $O(n)$ CCNOT gates, the subroutine UPDATE can be implemented using  $O(n^2)$ CCNOT gates.
\newpage

\begin{algorithm}[H]
\caption{UPDATE subroutine}\label{gate_update} 
\begin{algorithmic}
\Require 
\Register{Parameters}{$n,\ d\in \{2,3,\dots,n-1\},o\in B$}\\
{\bf Operates on qubits}\\
\Register{Control 1}{$E(j,k)$, $j, k \in X \setminus \{o\}$}
\Comment{adjacency matrix}\\
\Register{Control 2}{$P(d-1,j)$, $j \in X \setminus \{o\}$}
\Comment{Is $d(o,j)\leq d-1$?}\\
\Register{Targets}{$P(d,j)$, $j \in X \setminus \{o\}$}
\Comment{``Output''}\\
\Register{Ancillae}{$A(k)$, $F(k)$, $k \in X \setminus \{o\}$\\}

    \ForAll{$j \in X \setminus \{o\}$}
        \ForAll{$k \in X \setminus \{o,j\}$}

            \Gate{CCNOT}{
\Register{Controls}{$P(d-1, k)$ and $E(k, j)$}\\
\Register{Target}{$A(k)$}
}
\EndFor

               \LComment{
We view $X \setminus \{o,j\}$ as being synonymous to $\{1,\dots,n -2\}$}     

\Gate{OR${}_{(n -1)}$}{
\Register{Controls}{ $A(k)$, for $k\in X \setminus \{o,j\}$, and \\
$\hbox{\ }$\hspace{20mm}$P(d-1,j)$}\\
\Register{Target}{$P(d,j)$}\\
\Register{Ancillae}{$F(k)$, $k = 1,\dots,n-3$}
}
\ForAll{$k \in X \setminus \{o,j\}$}
            \Comment{Uncompute ancillae}
\Gate{REVERSE-CCNOT}{
\Register{Controls}{$P(d-1, k)$ and $E(k, j)$}\\
\Register{Target}{$A(k)$}
}
        \EndFor 
    \EndFor 

\end{algorithmic}
\end{algorithm}
\newpage

The following subroutine PATHS takes in the number
$o\in B$ of the starting vertex from which we consider  the
distances to the other vertices $x\in X\setminus \{o\}$.
The subroutine PATHS iterates the subroutine UPDATE and it
operates
on the qubits $E(j,k)$ corresponding to the adjacency matrix and the qubits $P(d,j)$, where $j \in X \setminus \{o\}$
and $d=1,2,\dots,n-1$. The subroutine PATHS 
changes the values of all qubits $P(d,j)$
so that they satisfy the quantum version of Condition~\eqref{eq:P} for the distances computed from the node $o$.
As UPDATE consists of  $O(n^2)$ CCNOT gates, the subroutine PATHS
can be implemented using  $O(n^3)$ CCNOT gates.

\begin{algorithm}[H]
\caption{PATHS subroutine}\label{gate_paths} 
\begin{algorithmic}
\Require \Register{Parameters}{$n,o$}\\
{\bf Operates on qubits}\\
\Register{Controls}{$E(j,k)$ for $j, k \in X \setminus \{o\}$}
\Comment{adjacency matrix}\\
\Register{Targets}{{$P(d,j)$ for $d = 1,\dots, n - 1$, $j \in X \setminus \{o\}$}}\\
\Register{Ancillae}{$A(k)$, $F(k)$ for $k \in X \setminus \{o\}$\\}
\LComment{UPDATE cannot be used to initialize $P(1,j)$, therefore this case is treated separately:}

 \ForAll{$j \in X \setminus \{o\}$}
    \Gate{CNOT}{
        \Register{Control}{$E(j,o)$}\\
        \Register{Target}{$P(1,j)$}
    }
\EndFor
\LComment{UPDATE computes $P(d,j)$ for $d\ge 2$:}
\For{$d = 2,\dots,n- 1$}      
        \Gate{UPDATE}{
\Register{Parameters}{$n$, $d$, $o$}\\
\Register{Controls 1}{$E(j,k)$ for $k \in X \setminus \{o\}$, $j \in X \setminus \{o\}$}\\
\Register{Controls 2}{$P(d-1,j)$ for $j \in X \setminus \{o\}$}\\
\Register{Targets}{$P(d,j)$ for $j \in X \setminus \{o\}$}\\
\Register{Ancillae}{$A(k)$, $F(k)$ for $k \in X \setminus \{o\}$}
}
\EndFor 
\end{algorithmic}
\end{algorithm}

In the following subroutine TEST, we consider  the
distances from all vertices $o\in B$ to the other vertices $b \in B\setminus \{o\}$.
The subroutine TEST iterates the subroutine PATHS, and it
operates
on the qubits $E(j,k)$ corresponding to the adjacency matrix and the qubits $T(o)$, where $o$ runs over all
vertices in $B$. The qubits  $T(o)$ tell if the distances
from the vertex $o$ to the other vertices $b \in B\setminus \{o\}$
have the correct values. We point out that the
uncomputing of the qubits $P(d,j)$ inside the subroutine TEST is the reason why
the ORACLE uses only $O(n^2)$ qubits. 
As PATHS consists of  $O(n^3)$ CCNOT gates, the subroutine TEST
can be implemented using  $O(m n^3)$ CCNOT gates.

\begin{algorithm}[H]
\caption{TEST subroutine}\label{gate_test}
\begin{algorithmic}
\Require
    \Register{Parameters}{$d_0(j,k)$ for $j, k \in B$}
    
    {\bf Operates on qubits}\\
    \Register{Controls}{$E(j,k)$ for $j, k \in X$}\Comment{adjacency matrix}\\
    \Register{Targets}{$T(o)$ for $o \in B$}\Comment{``Output''}\\
    \Register{Ancillae}{$A(k)$, $F(k)$ for $k \in X$}\\
\Register{Ancillae}{$P(d,j)$ for $d = 1,\dots, n - 1$}, $j \in X$\\
\For{$o\in B$}
    \Gate{PATHS}{
        \Register{Parameters}{$n$, $o$}\\
        \Register{Controls}{$E(j,k)$ for $j, k \in X\setminus\{o\}}$\\
        \Register{Targets}{$P(d,j)$ for $d = 1,\dots, n - 1$, $j \in X \setminus \{o\}$}\\
        \Register{Ancillae}{$A(k)$, $F(k)$ for $k \in X \setminus \{o\}$}
    }
    \LComment{Since PATHS uncomputes $A(k)$ and $F(k)$, they can be reused as ancillae below.}
\For{$j\in B\setminus \{o\}$}
\State $d \gets d_0(j,o)$
        \LComment{Determine if $d_E(j,o) = d_0(j,o)$:}
        \If{$d=1$}
            \Gate{CNOT}{
                \Register{Control}{$P(d,j)$}\\
                \Register{Target}{$A(j)$}
            }
        \Else
            \Gate{NOT}{$P(d-1,j)$}
            \Gate{CCNOT}{
                \Register{Controls}{$P(d, j)$ and $P(d-1, j)$}\\
                \Register{Target}{$A(j)$}
            }
        \EndIf
    \EndFor
\\
    \Gate{AND${}_{(m-1)}$}{
\Register{Controls}{$A(j),\ j\in B\setminus \{o\}$}\\
        \Register{Target}{$T(o)$}\\
        \Register{Ancillae}{$F(k)$, $k = 1,\dots,m-3$}
    }
\LComment{Uncompute $A(j)$ and restore $P(d-1,j)$ to its original value:}
    \For{$j\in B\setminus \{o\}$}
        \State $d \gets d_0(j,o)$
        \If{$d=1$}
            \Gate{CNOT}{
                \Register{Control}{$P(d,j)$}\\
                \Register{Target}{$A(j)$}
            }
        \Else
            \Gate{REVERSE-CCNOT}{
                \Register{Controls}{$P(d, j)$ and $P(d-1, j)$}\\
                \Register{Target}{$A(j)$}
            }
            \Gate{NOT}{$P(d-1,j)$}
        \EndIf
    \EndFor  
\LComment{Uncompute $P(d,j)$:}
    \Gate{REVERSE-PATHS}{
        \Register{Parameters}{$n$, $o$}\\
        \Register{Controls}{$E(j,k)$ for $j, k \in X\setminus\{o\}$}\\
        \Register{Targets}{$P(d,j)$, $d = 1,\dots, n - 1$, $j \!\in\! X\! \setminus \!\!\{o\}$}\\
        \Register{Ancillae}{$A(k)$, $F(k)$ for $k \in X \setminus \{o\}$}
    }
\EndFor 
\end{algorithmic}
\end{algorithm}

The following subroutine ORACLE operates on the qubits $E(j,k)$ corresponding to the adjacency matrix and the `output' qubit $R$. The value of $R$ 
is changed according to whether the graph corresponding
to the adjacency matrix $E(j,k)$ is such that  the distances
between all vertices in $B$
have the correct values. We point out that the subroutine ORACLE uses $O(n^2)$ qubits. 
As TEST consists of  $O(mn^3)$ CCNOT gates, ORACLE
can be implemented using  $O(m n^3)$ CCNOT gates.

\begin{algorithm}[H]
\caption{ORACLE subroutine}
\label{alg:quantum-oracle}
\begin{algorithmic}
\Require 
\Register{Parameters}{$n=|X|,$ $m=|B|,$ and $d_0(j,k)$ for $j, k \in B$}

{\bf Operates on qubits}\\
\Register{Controls}{$E(j,k)$ for $j, k \in X$}
\Comment{adjacency matrix}\\
\Register{Target}{$R$}
\Comment{``output''}\\
\Register{Ancillae}{{$P(d,j)$ for $d = 1,\dots, n - 1$, $j \in X$}}\\
\Register{Ancillae}{$A(k)$, $F(k)$ for $k \in X$\\}
\Register{Ancillae}{$T(o)$ for $o \in B$}\\

\Gate{TEST}{
    \Register{Parameters}{$d_0(j,k)$ for $j, k \in B$}\\
   \Register{Controls}{$E(j,k)$ for $j, k \in X$}\\
\Register{Targets} {$T(o)$ for $o \in B$}\\
\Register{Ancillae}{$A(k)$, $F(k)$ for $k \in X$\\}
\Register{Ancillae}{$P(d,j)$ for $d = 1,\dots, n - 1$, $j \in X$}
}      
\vspace{2mm}
        \Gate{AND${}_{(m)}$}{
\Register{Controls}{ $T(o),\ o\in B $}\\
\Register{Target}{$R$}\\
\Register{Ancillae}{$F(k)$, $k = 1,\dots,m-2$}
}
\vspace{2mm}
\Gate{REVERSE-TEST}{
    \Register{Parameters}{$d_0(j,k)$ for $j, k \in B$}\\
   \Register{Controls}{$E(j,k)$ for $j, k \in X$}\\
\Register{Targets} {$T(o)$ for $o \in B$}\\
\Register{Ancillae}{$A(k)$, $F(k)$ for $k \in X$\\}
\Register{Ancillae}{$P(d,j)$ for $d = 1,\dots, n - 1$, $j \in X$}
}
\end{algorithmic}
\end{algorithm}

\subsection{Quantum algorithm for Problem~\ref{prob:inverse-travel-time}}
\label{sec:grover-for-inverse-problem}

In order to apply Grover's algorithm to the inverse travel time problem for graphs we identify the search space $\{0,1\}^N$, $N=n(n-1)/2$, with the set of all graphs $(X,E)$ that have $|X|=n$ vertices. In this identification a vector $\vec{e} = e(j,k)_{1\le j<k\le n}$ specifies an adjacency matrix, which in turn specifies a graph. The set $T$ of solutions to the search problem consists of those vectors $\vec{e}$ for which the corresponding graph $(X,E)$ is a solution to Problem~\ref{prob:inverse-travel-time}. The quantum oracle corresponding to the set $T$ is the ORACLE subroutine (Algorithm~\ref{alg:quantum-oracle}).

Grover's algorithm consist of a repeated application of ORACLE subroutine together with another subroutine called DIFFUSION.
The target qubits for DIFFUSION are the $N$ qubits $E(j,k)$ that describe the adjacency matrix, and on these qubits it acts as the operator
\begin{equation}
    \label{eq:diffusion-subroutine}
    D_N := 2\kkket{\gamma_N}\bbbra{\gamma_N} - \mathrm{id}^{\otimes N},
\end{equation}
where $\mathrm{id}$ is the identity operator and $\kkket{\gamma_N}$ is the uniform superposition
\begin{equation}
    \label{eq:gamma-N}
    \kkket{\gamma_N} := \frac{1}{\sqrt{2^N}} \sum_{x\in\{0,1\}^N} \kkket{x}.
\end{equation}
See Appendix~\ref{app: Grover} for more details. DIFFUSION subroutine can be realized with $O(N)$ gates and $O(N)$ ancillae; see~\cite{MR1907291}.

Both ORACLE and DIFFUSION subroutines expect the ancillae to be initialized to the state $|0\ket$ in the beginning, and in the end they uncompute the ancillae back to the state $|0\ket$. As these subroutines are run sequentially, they may use the same ancillae. Both  subroutines require $O(N)$ ancillae; we will label the set of the ancillae for the subroutines by $S(k)$, $k=1,2,\dots,O(N)$. In addition, the quantum register contains the target qubit $R$ for the ORACLE.

Subroutine GROVER-L below  describes Grover's algorithm with $L$ iterations.
The main part of the algorithm consists of a repeated application of ORACLE and DIFFUSION subroutines. A key question is to decide the number $L$ of applications performed. If the number of solutions of the search problem, $|T|$, is \emph{a priori} known, then it is well known that an optimal value of $L$ can be calculated; see Appendix~\ref{app: Grover}. In our case the number of solutions, i.e., the number of graphs $(X,E)$ for which the graph distance function coincides with the given data $d_0$ on the set $B\subset X$, is \emph{a priori} unknown. In GROVER-L we choose to take $L$ as one of the input parameters to the algorithm; in Algorithm~\ref{alg:Grover-with-unknown-number-of-solutions} below we will show how GROVER-L can be used as a subroutine in an algorithm that solves the search problem with an unknown number of solutions.

The HADAMARD subroutine below consists of $N+1$ Hadamard gates; the subroutine applies a Hadamard gate to each of the $N+1$ target qubits individually.

\newpage

\begin{algorithm}[H]
    \caption{GROVER-L}
    \label{alg:Grover-L-iterations}
    \begin{algorithmic}
    \Require \Register{Parameters}{$L$, $n$, $m$, $d_0(j,k)$ for ${j,k}\in B$}\\
    \State initialize all qubits to $|0\rangle$
    \Gate{NOT}{$R$}
    \Gate{HADAMARD}{
            \Register{Target}{$R$ and $E(j,k)$ for $j, k \in X$}
        }
    \For{$\ell=1,2,\dots,L$}
        \Gate{ORACLE}{
            \Register{Parameters}{$n$, $m$, $d_0(j,k)$ for ${j,k}\in B$}\\
            \Register{Controls}{$E(j,k)$ for $j, k \in X$}\\
            \Register{Target} {$R$}\\
            \Register{Ancillae}{$S(k)$, $k=1,\dots,O(N)$}
        }
        \Gate{DIFFUSION}{
            \Register{Target}{$E(j,k)$ for $j, k \in X$}\\
            \Register{Ancillae}{$S(k)$, $k=1,\dots,O(N)$}
        }
    \EndFor
    \State $\vec{e} \gets$ measurement of $E(j,k)$ for ${j,k}\in X$
    \Comment{$\vec{e} = e(j,k)_{1\le j<k\le n}$}
    \State \Output $\vec{e}$
\end{algorithmic}
\end{algorithm}

Algorithm~\ref{alg:Grover-with-unknown-number-of-solutions} below consists of a repeated application of GROVER-L as a subroutine, where the value of $L$ is varied between the applications. After each application of GROVER-L it is tested if the obtained value $\vec{e}$ is a solution to Problem~\ref{prob:inverse-travel-time}. This test can be computed classically by Algorithm~\ref{alg_classicaltest}. If $\vec{e}$ is a solution, then the algorithm outputs $\vec{e}$ and halts.

The function $L(\theta)$ appearing in the algorithm is defined by
\begin{equation}
\label{eq:Grover-L-theta}
    L(\theta) := 
    \begin{cases}
        \left\lceil\frac{\pi}{4\theta} - \frac{1}{2}\right\rceil,\, \text{ if } 0 < \theta < \frac{\pi}{8},\\
        1,\text{ if } \frac{\pi}{8} \le \theta < \frac{\pi}{4},\\
        0, \text{ if } \frac{\pi}{4} \le \theta < \frac{\pi}{2}.
    \end{cases}
\end{equation}

\newpage

\begin{algorithm}[H]
    \caption{Quantum algorithm for inverse travel time problem for graphs}
    \label{alg:Grover-with-unknown-number-of-solutions}
    \begin{algorithmic}
    \Require \Register{Parameters}{$n$, $m$, $d_0(j,k)$ for ${j,k}\in B$}
    
    \State $N \gets n(n-1)/2$
    \For{$L=0,1,2$}
        \State $\vec{e} \gets$ GROVER-L with parameter $L$
        \If{$\vec{e}$ is a solution}
            \State \Output $\vec{e}$ and halt
        \EndIf
    \EndFor
    \For{$K=2^{N-4},2^{N-5},\dots,2,1$}
        \State find $\theta\in(0,\pi/2)$ such that $\sin\theta = \sqrt{K / 2^N}$ 
        \State $L\gets$ \Call{$L$}{$\theta$}
        \State $\vec{e} \gets$ GROVER-L with parameter $L$
        \If{$\vec{e}$ is a solution}
            \State \Output $\vec{e}$ and halt
        \EndIf
    \EndFor
    \State \Output ``no solution found''
\end{algorithmic}
\end{algorithm}

Properties of Grover's algorithm are discussed in Appendix~\ref{app: Grover}; see Propositions \ref{prop:Grover-evolution-of-state} and \ref{prop:Grover-with-unknown-number-of-solutions}. Combining these results with the ORACLE algorithm, we obtain our main result:

\begin{theorem}
    \label{thm:oracle and Grover-evolution-of-state}
    Let there be given $n=|X|$, $m=|B|$ and the target distance function $d_0:B\times B\to \mathbb N$ for the inverse travel time problem for graphs (Problem~\ref{prob:inverse-travel-time}). Then the following holds:
    \begin{enumerate}
    \item If Algorithm~\ref{alg:Grover-with-unknown-number-of-solutions} outputs a solution $\vec{e}$, then the solution is correct. This means that the graph $(X,E)$, where the set of edges $E$ is specified by $\vec{e}$, is such that the graph distance function on the set $B\subset X$ coincides with the given data $d_0$, that is, $d_{E}(x,y)=d_0(x,y)$ for all $x,y\in B$.
    \end{enumerate}
    
    Let $N=n(n-1)/2$, choose $\delta\in (0,1)$, and let $M>\log_2(\delta^{-1})$ be an integer. If the data $n$, $m$ and $d_0$ are such that Problem~\ref{prob:inverse-travel-time} admits a solution, then also the following hold:
    \begin{enumerate}[resume]
    \item  With probability greater than or equal to $1/2$, Algorithm~\ref{alg:Grover-with-unknown-number-of-solutions} finds a solution $\vec{e}$ with
    \begin{equation*}
        O\bigg(2^{N/2}
        \bigg(\frac 1e (N^{\frac 12}-m)\bigg)^{-(N^{\frac 12}-m)/2}
        \bigg)
    \end{equation*}
    queries to the quantum subroutine ORACLE (Algorithm~\ref{alg:quantum-oracle}) and to the classical Algorithm~\ref{alg_classicaltest}.
    \item If Algorithm~\ref{alg:Grover-with-unknown-number-of-solutions} is applied $M$ times, then at least one solution is found with probability greater than $1-\delta$.
    \end{enumerate}

    The  quantum subroutine ORACLE can be implemented with $O(N)=O(n^2)$ qubits and $O(m n^3)$ gates from the set $\{\text{NOT, CNOT, CCNOT}\}$.
\end{theorem}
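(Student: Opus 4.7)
The plan is to address the four claims in sequence. Claim (1) is immediate: Algorithm~\ref{alg:Grover-with-unknown-number-of-solutions} outputs a candidate $\vec e$ only after it is verified classically by Algorithm~\ref{alg_classicaltest}, so any output is automatically a correct adjacency matrix. Claim (3) will then reduce to a standard amplification argument: because the $M$ outer invocations of Algorithm~\ref{alg:Grover-with-unknown-number-of-solutions} are mutually independent and each succeeds with probability at least $1/2$ by claim (2), the probability that all $M$ trials fail is at most $2^{-M} < \delta$.

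For claim (4), I would track the resource count layer by layer through the hierarchy UPDATE $\subset$ PATHS $\subset$ TEST $\subset$ ORACLE. The ORACLE register contains the $N = n(n-1)/2$ adjacency qubits $E(j,k)$, the $O(n^2)$ path-existence qubits $P(d,j)$, the $O(n)$ working qubits $A(k)$ and $F(k)$, the $m$ qubits $T(o)$, and the output qubit $R$, totalling $O(n^2)$. The critical observation is that $P(d,j)$, $A(k)$, $F(k)$ are reused across the outer $o$-loop inside TEST because REVERSE-PATHS restores them to $|0\rangle$ before the next $o$ is processed; this is what keeps the overall qubit count at $O(n^2)$ rather than $O(mn^2)$. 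For the gate count, the iterated OR${}_{(n)}$ inside UPDATE costs $O(n)$ gates by Appendix~\ref{appendix:iterated-and-or}, so UPDATE uses $O(n^2)$ gates; PATHS runs UPDATE for $d = 2,\dots,n-1$, giving $O(n^3)$; TEST runs PATHS and its reverse once for each $o\in B$, giving $O(mn^3)$; ORACLE is TEST plus one AND${}_{(m)}$ plus REVERSE-TEST, still $O(mn^3)$.

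For claim (2), I would invoke Propositions~\ref{prop:Grover-evolution-of-state} and~\ref{prop:Grover-with-unknown-number-of-solutions} from Appendix~\ref{app: Grover}, which guarantee that when $|T|\ge 1$ Algorithm~\ref{alg:Grover-with-unknown-number-of-solutions} succeeds with probability at least $1/2$ using $O(\sqrt{2^N/|T|})$ calls to ORACLE and to the classical verifier Algorithm~\ref{alg_classicaltest}. The stated explicit form of the bound then follows from a lower bound on $|T|$: any solution graph can be relabelled on the non-observation vertices in $X\setminus B$ without altering $d_E|_{B\times B}$, so $|T|$ is bounded below by the number of such relabellings that yield distinct adjacency matrices. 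Feeding Stirling's estimate $k!\ge (k/e)^k$, applied to the appropriate $k$ governed by $N^{1/2}-m$, into $\sqrt{2^N/|T|}$ yields the displayed expression.

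The hard part will be the uncomputation bookkeeping required for claim (4). The $O(n^2)$ qubit count hinges on the structural invariant that every subroutine restores its ancillae to $|0\rangle$ before control passes to the next stage; a single missing REVERSE step—most notably REVERSE-PATHS at the end of the TEST outer loop—would force the $P(d,j)$ registers to persist simultaneously across all $o\in B$ and inflate the qubit count by a factor of $m$. Once this invariant is checked directly from the pseudocode of Algorithms~\ref{gate_update}, \ref{gate_paths}, \ref{gate_test} and~\ref{alg:quantum-oracle}, the gate count accumulates multiplicatively, and claims (2)--(3) reduce to the appendix together with the combinatorial lower bound on $|T|$.
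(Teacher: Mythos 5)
Your proposal is correct and follows essentially the same route as the paper: claim (1) by the built-in classical verification, claim (3) by independence of the $M$ runs, claim (2) by combining Propositions~\ref{prop:Grover-evolution-of-state} and~\ref{prop:Grover-with-unknown-number-of-solutions} with the lower bound $|T|\ge (n-m)!$ coming from relabelling $X\setminus B$ and a Stirling/Robbins estimate, and the resource count by accumulating through the UPDATE--PATHS--TEST--ORACLE hierarchy with uncomputation keeping the ancillae reusable. Your layer-by-layer gate/qubit bookkeeping is in fact more explicit than the paper's own proof, which simply declares the counts ``apparent from the description'' after stating them alongside each subroutine.
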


\begin{proof}
\begin{enumerate}
    \item Algorithm~\ref{alg:Grover-with-unknown-number-of-solutions} outputs a solution only after having verified it is correct.
    \item We note that even in the case when the inverse problem of finding a graph $(X,E)$ that satisfies $d_E|_{B\times B}=d_0$ is uniquely solvable, that is, the graph is uniquely determined up to an isomorphism, the number of solutions $\vec e$ is $(n-m)!$ as we can change the numbering of the vertices $X\setminus B$. By Robbins estimates similar to Stirling's formula, $p!\ge c p^{p+1/2}e^{-p}>c (p/e)^{p}$, where $c>0$ is a universal constant. As $n\ge N^{\frac 12}$, the number of solutions, $|T|$, satisfies 
    \begin{equation*}
        |T|\ge (n-m)! \ge c((N^{\frac 12}-m)/e)^{N^{\frac 12}-m}.
    \end{equation*}
    The claim then follows from Proposition \ref{prop:Grover-with-unknown-number-of-solutions}.
    \item In a single application of Algorithm~\ref{alg:Grover-with-unknown-number-of-solutions} a solution is found with probability at least $1/2$ by Proposition~\ref{prop:Grover-with-unknown-number-of-solutions}. As any two applications of Algorithm~\ref{alg:Grover-with-unknown-number-of-solutions} are statistically independent, the probability of not finding a solution $M$ times in a row is less than $1/2^M \le \delta$.
\end{enumerate}

The number of qubits and gates in subroutine ORACLE is apparent from its description; see Algorithm~\ref{alg:quantum-oracle}.
\end{proof}

\section[NP-completeness of the restricted inverse travel time\newline problem]{NP-completeness of the restricted inverse travel time problem}

We will now consider the computational complexity of the restricted inverse travel time problem for graphs (Problem~\ref{prob:restricted-inverse-travel-time}). As is customary in computational complexity theory, we will consider a decision version of the problem. The decision problem asks that given an instance of Problem~\ref{prob:restricted-inverse-travel-time} specified by data $(X, V, E_0, E_1, d_{0,V})$, does there exist at least one solution to the problem.

A procedure that solves the original formulation of the restricted inverse travel time problem, i.e., a procedure that finds a solution $E$ if such exists, trivially also solves the decision version. On the other hand, a procedure that solves the decision version can be turned into an algorithm that produces a solution. Namely, suppose that $\mathrm{SOLUTION\_EXISTS}$ is a procedure that with input $(X,V,E_0,E_1,d_{0,V})$ returns either true or false according to whether the instance of Problem~\ref{prob:restricted-inverse-travel-time} specified by this input has at least one solution. Then Algorithm~\ref{alg:sol-from-decision} below produces a solution---provided one exists---by calling $\mathrm{SOLUTION\_EXISTS}$ at most $|E_1|-|E_0|+1$ times.
\vfill

\begin{algorithm}[H]
\caption{Find a solution by solving decision problems}\label{alg:sol-from-decision}
\begin{algorithmic}

\If{$\neg \mathrm{SOLUTION\_EXISTS}(X,V,E_0,E_1,d_{0,V})$}
    \State \Output ``there are no solutions'' and halt
\EndIf

\While{$E_0\neq E_1$}
    \State pick arbitrary $\{x,y\}\in E_1\setminus E_0$
    \State $E_0'\gets E_0\cup\{x,y\}$
    \If{$\mathrm{SOLUTION\_EXISTS}(X,V,E_0',E_1,d_{0,V})$}
        \State $E_0 \gets E_0'$
    \Else
        \State $E_1 \gets E_1 \setminus\{x,y\}$
    \EndIf
\EndWhile
\State \Output $E_0$
\end{algorithmic}
\end{algorithm}

For the purpose of this section, the following three notions from computational complexity theory are relevant: the complexity class P, the complexity class NP, and the class of NP-complete problems. These classes measure computational complexity in terms of classical algorithms (as opposed to quantum algorithms). We informally review these notions below; for formal definitions we refer the reader to~\cite{MR2500087,MR1251285}.

The complexity class P consists of those problems whose solutions can be computed in time that is polynomial in the length of the input to the problem. Here `time' refers to the number of steps required by a Turing machine to produce a solution. Usually this class of problems is equated with the class of problems that are efficiently solvable in practice.

The complexity class NP consists of decision problems for which a ``yes'' answer has a short (polynomial length) ``certificate'' (or a ``proof'') that can be verified in polynomial time. The decision version of Problem~\ref{prob:restricted-inverse-travel-time} is in NP, because the certificate can be taken to be a set $E$ that solves the problem. Verifying that $E$ is a solution to the problem can be done by checking that $E_0\subset E \subset E_1$ and that $d_E |_V = d_{0,V}$, and these verifications can be done in polynomial time. 

A problem in NP is NP-complete if any other problem in NP is reducible (in polynomial time) to the problem. This means that a procedure that solves an NP-complete problem can be used to solve any other problem in NP. In this sense, NP-complete problems are the hardest problems in NP. 
Well-known NP-complete problems include the Boolean satisfiability problem and the decision version of the travelling salesman problem~\cite{garey}.

There is no known polynomial time algorithm for any NP-complete problem, and therefore NP-complete problems are often thought to be intractable in practice~\cite{garey} (although sometimes efficient methods to find good enough approximate solutions exist~\cite{hochba1997approximation}). It is generally thought that no polynomial time algorithm for an NP-complete problem exists, but there is no proof (this is the P vs.\ NP problem, one of the major open problems in theoretical computer science).

We emphasize the difference between computational complexity of a problem and time complexity of an algorithm: Computational complexity is an intrinsic property of a problem; it provides a lower bound for the run time of \emph{any} algorithm that solves the problem. Time complexity of an algorithm, on the other hand, is only a property of that specific algorithm.

Following theorem implies that there is a polynomial time classical algorithm for Problem~\ref{prob:restricted-inverse-travel-time}, if and only if P$=$NP:

\begin{theorem}\label{thm: NP complete}
  The decision version of the restricted inverse travel time problem for graphs (Problem~\ref{prob:restricted-inverse-travel-time}) is NP-complete.
\end{theorem}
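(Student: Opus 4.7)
The plan is to establish the two defining properties of NP-completeness separately: membership in NP, which is essentially already sketched in the surrounding exposition, and NP-hardness, which I would prove by a polynomial-time reduction from a classical NP-complete problem. For NP membership, I would take the certificate for a ``yes'' instance to be the edge set $E$ itself: given a candidate $E$, one checks in polynomial time that $E_0\subset E\subset E_1$ and, by running a breadth-first search from each vertex appearing as a first coordinate in $V$, computes $d_E|_V$ and compares it to $d_{0,V}$.

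For NP-hardness, I would reduce from the \emph{Hamiltonian Path with specified endpoints} problem, a standard NP-complete problem (see \cite{garey}): given a graph $G=(V_G,E_G)$ and vertices $s,t\in V_G$, decide whether $G$ contains a Hamiltonian path from $s$ to $t$. Given such an instance, construct an instance of the restricted inverse travel time problem by setting
\begin{equation*}
X=V_G,\quad E_0=\emptyset,\quad E_1=E_G,\quad V=\{(s,t)\},\quad d_{0,V}(s,t)=n-1,
\end{equation*}
where $n=|V_G|$. The construction is clearly computable in time polynomial in the size of $G$.

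The core of the argument is then the equivalence: $G$ has a Hamiltonian $s$-$t$ path if and only if the constructed instance admits a solution $E$ with $E_0\subset E\subset E_1$ and $d_E|_V=d_{0,V}$. For the forward direction, if $P$ is a Hamiltonian $s$-$t$ path in $G$, taking $E$ to be the edge set of $P$ gives a subset of $E_G=E_1$ with $d_E(s,t)=n-1$. For the reverse direction, suppose $E\subset E_G$ satisfies $d_E(s,t)=n-1$. Any shortest $s$-$t$ path in $(X,E)$ is simple, so a shortest path of length $n-1$ visits exactly $n$ distinct vertices; since $|X|=n$, this path visits every vertex of $X$, and so it is a Hamiltonian $s$-$t$ path in $(X,E)\subset G$.

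The main obstacle, conceptually, is choosing the ``right'' source problem and a reduction that interacts cleanly with the shortest-path semantics of $d_E$: distance $d_{0,V}(s,t)=n-1$ is precisely the value that forces any witness to be a Hamiltonian path, because it is simultaneously the maximum possible simple-path length and realizable only by a path visiting all vertices. Reductions using multiple distance constraints (e.g.\ from $3$-SAT via clause/variable gadgets) are possible but require more bookkeeping; the Hamiltonian-path reduction is preferable because it uses just one pair in $V$ and no required or forbidden edges beyond $E_1=E_G$. Combining NP membership with this polynomial reduction yields NP-completeness.
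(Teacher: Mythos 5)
Your proof is correct, but your NP-hardness reduction takes a genuinely different route from the paper's. The paper reduces from Boolean satisfiability: given a CNF formula $F$ with $N$ variables and $M$ clauses it builds a gadget graph on $1+4N+M$ vertices (a $\mathsf{TRUE}$ vertex, literal vertices $u_j,\bar u_j$, auxiliary vertices $a_j,\bar a_j$, and clause vertices $C_i$), sets $E_0=\emptyset$, lets $E_1$ encode which literals occur in which clauses, and demands that every prescribed distance equal the constant $3$; solutions then correspond exactly to satisfying assignments via the choice of which literals are joined to $\mathsf{TRUE}$. Your reduction from Hamiltonian Path with specified endpoints (correctly identified as NP-complete, cf.\ \cite{garey}) is considerably shorter and its correctness argument is sound: a shortest path is simple, so a simple $s$--$t$ path of length $n-1$ in an $n$-vertex graph must visit every vertex, and conversely the edge set of a Hamiltonian $s$--$t$ path realizes $d_E(s,t)=n-1$. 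The two reductions buy slightly different strengthenings of the theorem: yours shows the problem is already hard with $|V|=1$ and with no constraints beyond $E_1=E_G$, whereas the paper's shows it is hard even when all prescribed distances are bounded by the absolute constant $3$ (your construction necessarily uses a prescribed distance that grows with $|X|$). Your NP-membership argument is essentially identical to the paper's. Both arguments are complete and establish the theorem.
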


We prove Theorem \ref{thm: NP complete} in section \ref{sec: reduction to SAT-3}.

\section{Proofs of main results}
\subsection{Proof of  Theorem \ref{tree-rigidity}} \label{subsection-rigidity}
\smallskip
The proof of Theorem \ref{tree-rigidity} relies on the following key Proposition \ref{tree-monotone}. This proposition can be seen as a combinatorial analogue to the minimal filling problem for manifolds; see e.g. \cite{Gromov,Ivanov01,BI,Ivanov10}.

\begin{proposition} \label{tree-monotone}
Let $T_1=(X_1,E_1,B_1)$ and $T_2=(X_2, E_2, B_2)$ be two trees with $|B_1|=|B_2|$, where $B_1,B_2$ are the sets of all leaves of $T_1,T_2$. 
Denote by $d_1,d_2$ the graph distance function on $T_1,T_2$.
If there is a bijection $\sigma:B_1\to B_2$ such that 
$$d_1(p,q)\geq d_2(\sigma(p),\sigma(q)), \; \textrm{ for all }p,q\in B_1,$$
then $|X_1|\geq |X_2|$.
Moreover, the equality $|X_1|= |X_2|$ holds if and only if $T_1$ is isomorphic to $T_2$ via an isomorphism extending $\sigma$.
\end{proposition}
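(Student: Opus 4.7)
The plan is to deduce the inequality from a Lipschitz extension between the geometric realisations of the two trees. Write $\tilde T_i$ for the geodesic metric space obtained from $T_i$ by declaring each edge to be a unit interval; then $\tilde T_i$ is a finite metric tree whose one-dimensional Hausdorff measure satisfies $\mathcal H^1(\tilde T_i)=|E(T_i)|=|X_i|-1$. The hypothesis $d_1(p,q)\geq d_2(\sigma(p),\sigma(q))$ for $p,q\in B_1$ is exactly the statement that $\sigma\colon B_1\to B_2\subset \tilde T_2$ is $1$-Lipschitz for the metrics inherited from $\tilde T_1$ and $\tilde T_2$.

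Finite metric trees are injective (``hyperconvex'') metric spaces, i.e.\ absolute $1$-Lipschitz retracts in the sense of Aronszajn-Panitchpakdi and Isbell. Therefore $\sigma$ extends to a $1$-Lipschitz map $\phi\colon \tilde T_1\to \tilde T_2$. Its image $\phi(\tilde T_1)$ is connected and contains $B_2$; since $B_2$ is the set of all leaves of $T_2$ and removing any point of $\tilde T_2$ splits the tree into components each containing a leaf, any connected subset of $\tilde T_2$ containing every leaf must equal $\tilde T_2$. Hence $\phi(\tilde T_1)=\tilde T_2$. Because $1$-Lipschitz maps do not increase $\mathcal H^1$,
\[
|X_1|-1\;=\;\mathcal H^1(\tilde T_1)\;\geq\; \mathcal H^1(\phi(\tilde T_1))\;=\;\mathcal H^1(\tilde T_2)\;=\;|X_2|-1,
\]
which yields $|X_1|\geq|X_2|$.

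For the equality case, $|X_1|=|X_2|$ forces the displayed chain to be tight and thus $\phi$ to preserve $\mathcal H^1$. Combined with the $1$-Lipschitz property, this makes $\phi$ a local isometry, and since both $\tilde T_1$ and $\tilde T_2$ are simply connected, $\phi$ is in fact a global isometry that restricts on leaves to $\sigma$. The graph $T_i$ is intrinsic to $\tilde T_i$: its vertices are exactly the points of $\tilde T_i$ at integer distance from every leaf, and its edges are the unit-length segments joining consecutive such vertices. Therefore $\phi$ carries $V(T_1)$ bijectively onto $V(T_2)$ and preserves adjacency, giving the desired graph isomorphism $T_1\cong T_2$ extending $\sigma$.

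The main technical step is the existence of the $1$-Lipschitz extension $\phi$; for the finite metric trees at hand it can alternatively be produced directly by induction on $|B_1|$, by pruning a cherry of $T_2$ together with the corresponding trunk in $T_1$ and matching the pruning lengths through the median structure of metric trees. In this combinatorial route the hard part is precisely the balancing of the pruning lengths on the two sides, since a naive choice of cherry need not satisfy the desired comparison.
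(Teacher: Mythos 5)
Your argument is correct in outline but takes a genuinely different route from the paper. The paper proves the proposition by induction on the number of leaves: it maximizes the Gromov product $(p,q)_z=d_1(p,z)+d_1(q,z)-d_1(p,q)$ over leaves to locate a pair $p,q$ whose pendant paths $T(p),T(q)$ meet at a common branch vertex, deduces that at least one of $|T(p)|\ge|T(\sigma(p))|$, $|T(q)|\ge|T(\sigma(q))|$ holds, prunes that pendant path on both sides, and recurses; the equality case is handled by tracking where a strict distance inequality forces a strict drop in vertex count, and then invoking the Tree-metric Theorem. Your proof instead passes to geometric realisations, uses hyperconvexity of metric trees to extend $\sigma$ to a $1$-Lipschitz map $\phi\colon\tilde T_1\to\tilde T_2$, shows $\phi$ is onto because a path-connected subset of a tree containing all leaves is the whole tree, and compares $\mathcal H^1$. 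This is exactly the ``minimal filling'' viewpoint the paper only alludes to in a remark, and for the inequality $|X_1|\ge|X_2|$ your argument is complete and arguably cleaner; what the paper's combinatorial induction buys is that it stays entirely inside finite graph theory and yields the rigidity statement with the same machinery it already needs (the Tree-metric Theorem), which matters since the proposition is then applied to spanning trees of arbitrary graphs.

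The one step you should tighten is the equality case. From $\mathcal H^1(\phi(\tilde T_1))=\mathcal H^1(\tilde T_1)$ you assert that $\phi$ is a local isometry and then a global one ``since both trees are simply connected''; that is a Riemannian-style deduction that does not apply verbatim here, and $1$-Lipschitz maps between trees can certainly fail to be local isometries (folding). The correct argument is a measure count: if $\phi(x)=\phi(y)$ with $x\ne y$, the loop $\phi\circ\gamma$ over the arc $[x,y]$ covers each point of its image at least twice, so $\mathcal H^1(\phi([x,y]))\le \tfrac12 d_1(x,y)$ and surjectivity onto a tree of equal total length fails; similarly, if $\phi$ is injective but $d_2(\phi(x),\phi(y))<d_1(x,y)$ for some pair, the image of $[x,y]$ is the geodesic $[\phi(x),\phi(y)]$ and the total image measure again falls short. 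This forces $\phi$ to be an isometry, after which your characterisation of vertices as the points at integer distance from every leaf correctly recovers the graph isomorphism. With that repair the proposal is a valid alternative proof.
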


\begin{proof}
The proof is done by induction on $|B_1|(=|B_2|)$.
The case with $|B_1|=2$ is trivial.
When $|B_1|=3$, there can only be one vertex with degree larger than $2$. Then the total number of edges of the tree is simply half of the sum of all leaf-to-leaf distances. Hence the condition that $d_1\geq d_2$ on leaves implies that $|E_1|\geq |E_2|$, which for trees shows $|X_1|\geq |X_2|$. Furthermore, the equality case $|X_1|=|X_2|$ implies that $d_1=d_2$ on all pair of leaves, which determines a unique tree structure due to the Tree-metric Theorem (Lemma \ref{tree-metric}).

By induction, assume that the proposition is true for $|B_1|\leq m-1$, and consider the case when $|B_1|=m$.
Let us use a simpler notation $\bar p=\sigma(p)$ for $p\in B_1$. Given a leaf $p$ of a tree $T$, we denote by $T(p)$ the maximal (in length) simple path $p x_1\cdots x_N y$ such that $\textrm{deg}_T (x_i) = 2$ for all $i=1,\cdots, N$.
Intuitively, this means that $y$ is the first vertex where the path from the leaf $p$ starts to branch.
(Recall that a path is simple if the vertices in the path do not repeat.)
Denote by $|T(p)|$ the length of the path $T(p)$.
First we prove the following claim:

\smallskip
\textbf{Claim ($\ast$).} There exists a leaf $p\in B_1$ such that $|T(p)|\geq |T(\bar p)|$.

\smallskip
The claim can be proved as follows. 
Consider the following quantity
\begin{equation}
(p,q)_z:=d_1(p,z)+d_1(q,z)-d_1(p,q).
\end{equation}
We remark that this quantity, sometimes called the Gromov product, was considered in e.g. \cite{Buneman74,Evans08}, \cite[Chapter 8.4]{BBI}.

We search for $p,q,z\in B_1$ such that 
\begin{equation}
(p,q)_z \textrm{ is maximal among all choices of }p,q,z\in B_1.
\end{equation}
Then by maximality, one can show that the paths $T(p)$ and $T(q)$ join at the non-leaf end vertex, so that 
\begin{equation} \label{path-join}
d_1(p,q)=|T(p)|+|T(q)|.
\end{equation}
(This fact can be seen as follows. Consider the unique shortest path $[pz]$, meaning from $p$ to $z$, and the shortest path $[qz]$. The two paths must coincide already before reaching $z$, since $z$ has degree $1$ and the simple path between any pair of vertices in a tree is unique. Denote by $v$ the vertex where the two paths first join. Then the quantity $(p,q)_z$ above is equal to $2d(v,z)$. We claim that all vertices on the paths $[pv]$ and $[qv]$ have degree $2$ except $p,q,v$, in which case \eqref{path-join} is satisfied. Otherwise, if there exists a vertex $v'\neq v$ of degree at least $3$ on $[pv]$, then the tree branches out from $[pv]$ at $v'$, and one can take any leaf $b'$ on this branched subtree.
Then 
the quantity $(p,b')_z$ is equal to $2d(z,v')$ which is larger than $2d(z,v)$, contradicting maximality.)

Then one can see that one of the following situations must happen:
$$\textrm{either }|T(p)|\geq |T(\bar p)|, \;\textrm{ or }|T(q)|\geq |T(\bar q)|.$$
Indeed, if none of the above happens, then
$$d_1(p,q)=|T(p)|+|T(q)|<|T(\bar p)|+|T(\bar q)|\leq d_2(\bar p,\bar q),$$
which is a contradiction. The claim ($\ast$) is proved.

\smallskip
Using the claim ($\ast$), we remove the path $T(p), T(\bar p)$ from $T_1,T_2$ respectively. 
The resulting graphs are still trees, with one less leaf. We denote by $T_1-T(p)$ and $T_2-T(\bar p)$ the remaining subtrees. The distance relation $d_1\geq d_2$ on the remaining leaves is still preserved, since the shortest path between the remaining leaves does not go through $T(p)$ and $T(\bar p)$.
Hence by the induction assumption, 
$$\textrm{number of vertices of }T_1-T(p)\geq \textrm{ number of vertices of }T_2-T(\bar p).$$
Now this yields $|X_1|\geq |X_2|$, using the claim ($\ast$) that $|T(p)|\geq |T(\bar p)|$. This proves the inequality part of the proposition.

\medskip
For the equality case, we need a little modification of the argument above. 
If $d_1=d_2$ on all pair of leaves, then $X_1$ is isomorphic to $X_2$ by the Tree-metric Theorem (Lemma \ref{tree-metric}). Otherwise, there exist some $p_0,q_0\in B_1$ such that 
\begin{equation}\label{strict-data}
d_1(p_0,q_0)>d_2(\bar p_0,\bar q_0).
\end{equation}
We need to show that in this case $|X_1|>|X_2|$. The point is to repeat the induction process above until one finds a strict inequality.

Following the argument above, we search for $p,q\in B_1$ such that $d_1(p,q)=|T(p)|+|T(q)|$.
The following three cases may happen.

\smallskip
(1) If $\{p,q\}=\{p_0,q_0\}$, then one of the following situations, similar as above but in the strict sense, must happen:
$$\textrm{either }|T(p_0)|> |T(\bar p_0)|, \;\textrm{ or }|T(q_0)|> |T(\bar q_0)|.$$
Indeed, if none of the above happens, then
$$d_1(p_0,q_0)=|T(p_0)|+|T(q_0)|\leq|T(\bar p_0)|+|T(\bar q_0)|\leq d_2(\bar p_0,\bar q_0),$$
which is a contradiction to \eqref{strict-data}.
Now suppose $|T(p_0)|> |T(\bar p_0)|$ happens.
Then we remove the paths $T(p_0),T(\bar p_0)$ from the graphs, and
this strictly inequality leads to $|X_1|>|X_2|$ after using the induction assumption.

(2) Consider the case when only one of $p_0, q_0$ is in $\{p,q\}$, say, we have picked out $p_0,q$ for some other $q\neq q_0$. If it turns out $|T(p_0)|> |T(\bar p_0)|$, then we simply remove $T(p_0),T(\bar p_0)$ and get a strict inequality.
If not, i.e., $|T(p_0)|\leq |T(\bar p_0)|$, the distance relation $d_1\geq d_2$ on leaves and \eqref{path-join} imply $|T(q)|\geq |T(\bar q)|$. Then we remove $T(q),T(\bar q)$. Although the latter situation does not give a strict inequality, one can still proceed the induction further.

(3) If $p_0,q_0 \notin \{p,q\}$, then proceed the induction further as in the original argument.

\smallskip
One sees that this procedure ends when case (1) or the first situation in case (2) happens, where one gets a strict inequality. This proves $|X_1|>|X_2|$ under \eqref{strict-data}.

\smallskip
Now for the equality case of the proposition, if $|X_1|=|X_2|$, then $d_1=d_2$ on all pair of leaves (the bijection $\sigma$ is fixed); indeed, if there exists a pair $p_0,q_0\in B_1$ such that $d_1(p_0,q_0)>d_2(\bar p_0,\bar q_0)$, the argument above yields $|X_1|>|X_2|$. Then the Tree-metric Theorem (Lemma \ref{tree-metric}) implies that $T_1$ is isomorphic to $T_2$ via an isomorphism extending $\sigma$.
The proof of Proposition \ref{tree-monotone} is concluded.
\end{proof}

Now we apply Proposition \ref{tree-monotone} to prove Theorem \ref{tree-rigidity}.

\begin{proof}[Proof of Theorem \ref{tree-rigidity}]
Let $T_G$ denote a spanning tree of $G$, i.e., a tree subgraph of $G$ containing all vertices of $G$. 
Recall that $T_G$ exists since $G$ is connected.
Denote by $d_{T_G}$ the distance function on $T_G$. Since $T_G$ is a subgraph of $G$, then $d\leq d_{T_G}$. 
This gives 
\begin{equation} \label{eq-monotone}
d_{T_G}|_{B \times B} \geq \bar d|_{\bar B \times \bar B}.
\end{equation}
Vertices in $B$ still have degree $1$ in $T_G$ since they have degree $1$ in $G$.

Note that it may seem that the vertices in $X\setminus B$ could also have degree $1$ in $T_G$.
However, this cannot happen in the class with fixed total number of vertices due to Proposition \ref{tree-monotone}. Indeed, consider the subtree of $T_G$ only having vertices $B$ as leaves, and then \eqref{eq-monotone} and Proposition \ref{tree-monotone} already determine that the subtree has at least $|\bar X|=n$ number of vertices. Thus, any additional leaf in $T_G$ will make the number of vertices in $X$ larger than $n$, which is a contradiction to $|X|=n$.

Now we have shown that $T_G$ is a tree with the set of all leaves $B$. Since $|X|=|\bar X|=n$, then \eqref{eq-monotone} and the equality case of Proposition \ref{tree-monotone} show that $T_G=T$ via an isomorphism extending the identification between $B$ and ${\bar B}$.

At last, we show that $G$ can only be a tree, i.e., $G=T_G$.
This is because adding any edge to a tree (while keeping the graph simple) decreases at least one leaf-to-leaf distance. Indeed, suppose the edge is added to a tree between vertices $v_1,v_2$. Then one can take the shortest path between $v_1,v_2$ in $T_G$, and extend it to a shortest path between a pair of leaves.
Clearly, the length of this path between the leaves strictly decreases when the edge is added. 
Assume that $G$ is not a tree.
Since we have shown that $T_G=T$ via an isomorphism extending the identification between $B,{\bar B}$, then there would exist $b_1,b_2\in B$ such that $d(b_1,b_2)<d_{T_G}(b_1,b_2)=\bar d(\bar b_1,\bar b_2)$, a contradiction.
\end{proof}

The following is the uniqueness part of the classical Tree-metric Theorem 
(e.g. \cite{Buneman71,Buneman74,HY,S69,Z65}) 
which we have used in the proofs.

\begin{lemma}[Uniqueness part of the classical Tree-metric Theorem] \label{tree-metric}
Let $T_1=(X_1,E_1,B_1)$ and $T_2=(X_2, E_2, B_2)$ be two trees with $|B_1|=|B_2|$, where $B_1,B_2$ are the sets of all leaves of $T_1,T_2$. If there is a bijection $\sigma:B_1\to B_2$ such that $d_1|_{B_1\times B_1}=d_2\circ \sigma|_{B_1\times B_1}$, then $T_1$ is isomorphic to $T_2$ via an isomorphism extending $\sigma$.
\end{lemma}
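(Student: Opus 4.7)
The plan is to prove Lemma \ref{tree-metric} by induction on $m=|B_1|=|B_2|$. The base cases $m=1$ (each tree is a single vertex) and $m=2$ (each tree is a path whose length is $d_1(p,q)$) are immediate. For the inductive step with $m\ge 3$, the strategy is to identify and peel off a ``cherry'' (a pair of leaves whose simple paths join at a common first branching vertex), verify that the cherry is transported by $\sigma$, and apply the inductive hypothesis to the reduced trees.

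First I would recycle the Gromov product argument already used in the proof of Proposition \ref{tree-monotone}: choose $p,q,z\in B_1$ maximizing $(p,q)_z=d_1(p,z)+d_1(q,z)-d_1(p,q)$. That proof establishes that the paths $T(p)$ and $T(q)$ must meet at a common branching vertex $v$, so $d_1(p,q)=|T(p)|+|T(q)|$. Because $d_1=d_2\circ\sigma$ on $B_1\times B_1$, the Gromov products transfer under $\sigma$, and hence $\bar p:=\sigma(p)$ and $\bar q:=\sigma(q)$ form a cherry in $T_2$ with branching vertex $\bar v$, satisfying $d_2(\bar p,\bar q)=|T(\bar p)|+|T(\bar q)|$. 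Next, for any leaf $w\in B_1\setminus\{p,q\}$, the shortest path from $w$ to either of $p,q$ passes through $v$, so $d_1(p,w)-d_1(q,w)=|T(p)|-|T(q)|$. Combining this with $d_1(p,q)=|T(p)|+|T(q)|$ yields
\begin{equation*}
|T(p)|=\tfrac{1}{2}\bigl(d_1(p,q)+d_1(p,w)-d_1(q,w)\bigr),
\end{equation*}
and analogously for $|T(q)|$. The same formula applied in $T_2$ then gives $|T(p)|=|T(\bar p)|$ and $|T(q)|=|T(\bar q)|$.

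Now I would reduce: remove the interior vertices and edges of $T(p)$ and $T(q)$ from $T_1$ (retaining $v$) to form $T_1'$, and do the same in $T_2$ to form $T_2'$. Define the new leaf set $B_1':=(B_1\setminus\{p,q\})\cup\{v\}$ and analogously $B_2'$, and extend $\sigma$ to $\sigma'$ by $\sigma'(v)=\bar v$. A direct computation shows that for each $b\in B_1\setminus\{p,q\}$,
\begin{equation*}
d_{T_1'}(v,b)=d_1(p,b)-|T(p)|=d_2(\bar p,\sigma(b))-|T(\bar p)|=d_{T_2'}(\bar v,\sigma(b)),
\end{equation*}
while distances between other distinguished vertices are unchanged. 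Thus $\sigma'$ preserves the distance function on $B_1'\times B_1'$. Applying the inductive hypothesis produces an isomorphism $T_1'\to T_2'$ extending $\sigma'$; reattaching the paths $T(p),T(q)$ to $v$ and $T(\bar p),T(\bar q)$ to $\bar v$ (which are isometric since $|T(p)|=|T(\bar p)|$ and $|T(q)|=|T(\bar q)|$) produces the desired isomorphism $T_1\to T_2$ extending $\sigma$.

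The main obstacle I anticipate lies in the reduction step: the induction hypothesis demands that $B_1'$ be exactly the set of leaves of $T_1'$, but the branching vertex $v$ becomes a genuine leaf in $T_1'$ only when $\deg_{T_1}(v)=3$; when $\deg_{T_1}(v)\ge 4$, the vertex $v$ remains an interior vertex of $T_1'$. To handle this cleanly, I would either (i) split into two cases and, in the high-degree case, peel off only the single branch $T(p)$ so that the degree of $v$ drops by just one, inducting on the number of vertices rather than the number of leaves; or (ii) strengthen the inductive hypothesis to allow a distinguished set $B$ that need not consist exclusively of leaves but at least contains all leaves. Option (ii) is compatible with the same Gromov-product cherry argument and leads to a uniform reduction step, and it is the route I would pursue, checking that the degree of $v$ in $T_1'$ matches that of $\bar v$ in $T_2'$ (since both equal $\deg_{T_1}(v)-2=\deg_{T_2}(\bar v)-2$ by the preceding isomorphism on reduced data).
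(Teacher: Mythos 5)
The paper does not actually prove Lemma~\ref{tree-metric}: it is stated as the uniqueness half of the classical Tree-metric Theorem and attributed to the literature, so there is no in-paper proof to compare against; the closest in-paper analogue is the induction in the proof of Proposition~\ref{tree-monotone}, which your argument deliberately parallels. Your cherry-peeling induction is essentially the standard proof of the classical result, and the steps you spell out are right: the reuse of the Gromov-product maximization, the identity $|T(p)|=\tfrac12\bigl(d_1(p,q)+d_1(p,w)-d_1(q,w)\bigr)$, and the transfer of the maximizing triple under $\sigma$ all hold. You have also correctly isolated the one genuine subtlety, namely that the branching vertex $v$ need not become a leaf after the cherry is removed. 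Of your two fixes, option (i) is the safe one, and it needs only one small addition you do not state explicitly: after obtaining the inductive isomorphism $\phi\colon T_1'\to T_2'$ extending $\sigma|_{B_1\setminus\{p\}}$, you must verify $\phi(v)=\bar v$ before regluing the pendant path. This follows from your displayed computation $d_{T_1'}(v,b)=d_{T_2'}(\bar v,\sigma(b))$ together with the standard fact that two distinct vertices of a tree cannot have identical distances to every leaf (extend the path between them past one of them to a leaf). Note also that peeling a single branch already drops the leaf count from $m$ to $m-1$, since $\deg_{T_1}(v)\ge 3$ keeps $v$ internal, so you can keep inducting on $m$ and need not switch to inducting on the number of vertices. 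I would be more cautious about option (ii): the Gromov-product argument as written in the paper uses that the reference point $z$ has degree $1$ (so that $[pz]$ and $[qz]$ merge strictly before $z$), and once $B$ is allowed to contain internal vertices the maximizing triple can degenerate (for instance $(p,v)_b$ can tie the maximum with $v$ internal), so ``the same argument'' does not go through verbatim without a tie-breaking rule. Stick with option (i).
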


\subsection{Proof of Theorem \ref{thm: NP complete}}
\label{sec: reduction to SAT-3}

We will begin by describing a polynomial time algorithm that takes an instance of a Boolean satisfiability problem (SAT), i.e., a Boolean formula $F$ in conjunctive normal form, to an instance of the restricted inverse travel time problem for graphs, i.e., to the data 
\begin{equation}
    \label{eq:data-for-restricted-travel-time-problem}
    \mathcal{D}=(X,V,E_0,E_1,d_{0,V})
\end{equation} 
in Problem~\ref{prob:restricted-inverse-travel-time}. After that, in Proposition~\ref{prop:reduction} below, we will show that the map $F\mapsto\mathcal{D}(F)$ is in fact a reduction, i.e., formula $F$ is satisfiable, if and only if the instance of Problem~\ref{prob:restricted-inverse-travel-time} specified by $\mathcal{D}(F)$ admits a solution. The NP-completeness of the restricted inverse travel time problem for graphs then readily follows from the reduction.

Recall that a Boolean formula $F$ in conjunctive normal form over variables $u_j$, $j=1,2,\ldots, N$, with $M$ clauses denoted by $C_i$, $i = 1,2,\ldots, M$, is an expression of the form
\begin{subequations}\label{eq:formula}
\begin{equation}
    F = C_1 \land C_2 \land \cdots \land C_M,
\end{equation}
where
\begin{equation}
    C_i = \ell_{1,i} \lor \ell_{2,i} \lor \cdots \ell_{k_i,i},
\end{equation}
\end{subequations}
and for each $i$ and $j$ either $\ell_{j,i} = u_k$ or $\ell_{j,i} = \bar{u}_k$ for some $k$ (here $\bar{u}_k$ is shorthand for the negation $\lnot u_k$ of $u_k$). The Boolean satisfiability problem is to decide whether there exists an assignment of truth values to the variables $u_j$ that makes formula $F$ evaluate to true.

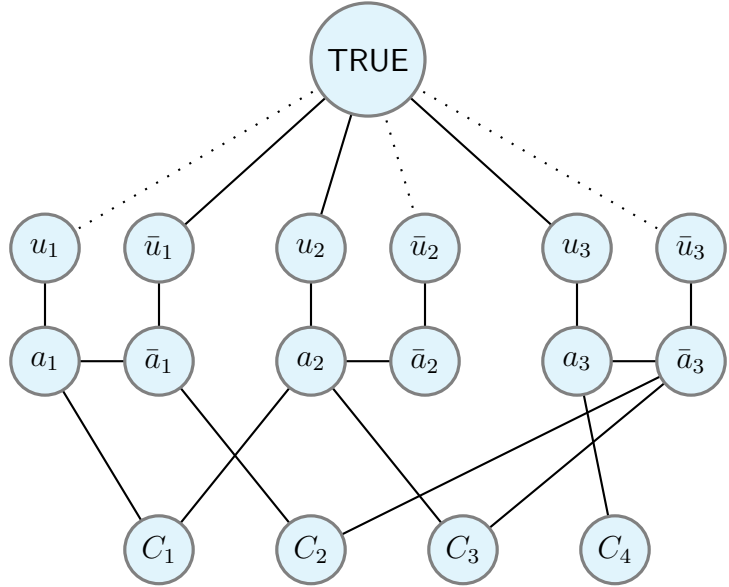
\begin{figure}[h]
    \centering
    \begin{tikzpicture}
\begin{scope}[thick]
\draw[loosely dotted] 
    (4.25,6.50) -- (0.0, 4.0);
    \draw (4.25,6.50) -- (1.5, 4.0);
    \draw (4.25,6.50) -- (3.5, 4.0);
    \draw[loosely dotted] (4.25,6.50) -- (5.0, 4.0);
    \draw (4.25,6.50) -- (7.0, 4.0);
    \draw[loosely dotted] (4.25,6.50) -- (8.5, 4.0);

\draw (0.0,4.0) -- (0.0,2.5);
        \draw (0.0,2.5) -- (1.5,2.5);
        \draw (1.5,2.5) -- (1.5,4.0);

        \draw (3.5,4.0) -- (3.5,2.5);
        \draw (3.5,2.5) -- (5.0,2.5);
        \draw (5.0,2.5) -- (5.0,4.0);

        \draw (7.0,4.0) -- (7.0,2.5);
        \draw (7.0,2.5) -- (8.5,2.5);
        \draw (8.5,2.5) -- (8.5,4.0);
                \draw (0.0,2.5) -- (1.5,0);
        \draw (3.5,2.5) -- (1.5,0);
        \draw (1.5,2.5) -- (3.5,0);
        \draw (8.5,2.5) -- (3.5,0);
        \draw (3.5,2.5) -- (5.5,0);
        \draw (8.5,2.5) -- (5.5,0);
        \draw (7.0,2.5) -- (7.5,0);
    \end{scope}

\draw[draw=gray, line width=1.2pt, fill=cyan!10] (4.25,6.50) circle [radius=0.75] node {$\mathsf{TRUE}$};

    \draw[draw=gray, line width=1.2pt, fill=cyan!10] (0.0,4.0) circle [radius=0.45] node {$u_1$};
    \draw[draw=gray, line width=1.2pt, fill=cyan!10] (1.5,4.0) circle [radius=0.45] node {$\bar{u}_1$};
    \draw[draw=gray, line width=1.2pt, fill=cyan!10] (0.0,2.5) circle [radius=0.45] node {$a_1$};
    \draw[draw=gray, line width=1.2pt, fill=cyan!10] (1.5,2.5) circle [radius=0.45] node {$\bar{a}_1$};

    \draw[draw=gray, line width=1.2pt, fill=cyan!10] (3.5,4.0) circle [radius=0.45] node {$u_2$};
    \draw[draw=gray, line width=1.2pt, fill=cyan!10] (5.0,4.0) circle [radius=0.45] node {$\bar{u}_2$};
    \draw[draw=gray, line width=1.2pt, fill=cyan!10] (3.5,2.5) circle [radius=0.45] node {$a_2$};
    \draw[draw=gray, line width=1.2pt, fill=cyan!10] (5.0,2.5) circle [radius=0.45] node {$\bar{a}_2$};

    \draw[draw=gray, line width=1.2pt, fill=cyan!10] (7.0,4.0) circle [radius=0.45] node {$u_3$};
    \draw[draw=gray, line width=1.2pt, fill=cyan!10] (8.5,4.0) circle [radius=0.45] node {$\bar{u}_3$};
    \draw[draw=gray, line width=1.2pt, fill=cyan!10] (7.0,2.5) circle [radius=0.45] node {$a_3$};
    \draw[draw=gray, line width=1.2pt, fill=cyan!10] (8.5,2.5) circle [radius=0.45] node {$\bar{a}_3$};

    \draw[draw=gray, line width=1.2pt, fill=cyan!10] (1.5,0) circle [radius=0.45] node {$C_1$};
    \draw[draw=gray, line width=1.2pt, fill=cyan!10] (3.5,0) circle [radius=0.45] node {$C_2$};
    \draw[draw=gray, line width=1.2pt, fill=cyan!10] (5.5,0) circle [radius=0.45] node {$C_3$};
    \draw[draw=gray, line width=1.2pt, fill=cyan!10] (7.5,0) circle [radius=0.45] node {$C_4$};
\end{tikzpicture}
    \caption{From $F=(u_1\lor u_2) \land (\bar{u}_1\lor\bar{u}_3) \land (u_2\lor\bar{u}_3) \land u_3$ we construct an instance of the restricted inverse travel time problem (Problem~\ref{prob:restricted-inverse-travel-time}). The set $E_0$ of edges that must be present in the solution is taken to be the empty set; the set $E_1$ of edges that are allowed to appear in the solution consists of the solid and the dotted lines. The distance from $u_j$ to $\bar{u}_j$ and from $\mathsf{TRUE}$ to $C_i$ is required to be $3$ for all $i$ and $j$. A solution for this instance is the subgraph drawn with solid lines; in the corresponding assignment that satisfies $F$ the literals connected with a solid line to $\mathsf{TRUE}$ are true.}
  \label{fig:graph}
\end{figure}

Given a Boolean formula $F$ in conjuctive normal form, we construct an instance $\mathcal{D}=\mathcal{D}(F)$ of Problem~\ref{prob:restricted-inverse-travel-time}, where $\mathcal{D}$ is as in~\eqref{eq:data-for-restricted-travel-time-problem}, as follows (see figure~\ref{fig:graph}): 

The set $X = X(F)$ of vertices consists of following $1 + 4N + M$ elements:
\begin{equation*}
    \begin{split}
    X := \{ \mathsf{TRUE},& u_1, a_1, \bar{a}_1, \bar{u}_1, u_2, a_2, \bar{a}_2, \bar{u}_2, \ldots, u_N, a_N, \bar{a}_N, \bar{u}_N,\\
    &C_1, C_2, \ldots, C_M\}.
  \end{split}
\end{equation*}
Here $\mathsf{TRUE}$, $a_j$, $\bar{a}_j$, and $C_i$ are arbitrary symbols, and $u_j$
and $\bar{u}_j$ are the variables and their negations from formula $F$. 

The set $V=V(F)$ is
\begin{equation*}
     V := \{ (u_j,\bar{u}_j) : j=1,2,\dots,N\} \cup 
     \{(\mathsf{TRUE},C_i) : i=1,2,\dots,M\}.
\end{equation*}

The set $E_0$ of edges that are required to be part of the solution is defined to be empty: $E_0(F) := \emptyset$. The set $E_1=E_1(F)$ of edges that are allowed to be present in the solution is the union of the following four sets~\eqref{eq:E1}--\eqref{eq:E4}:
\begin{subequations}
\begin{equation}
  \label{eq:E1}
  \bigcup_{j=1}^N \big\{ \{\mathsf{TRUE},u_j\}, \{\mathsf{TRUE}, \bar{u}_j\} \big\},
\end{equation}
\begin{equation}
  \label{eq:E2}
  \bigcup_{j=1}^N \big\{ \{u_j,a_j\}, \{a_j,\bar{a}_j\}, \{\bar{a}_j, \bar{u}_j\} \big\},
\end{equation}
and (below $u_j\in C_i$ (resp.~$\bar{u}_j\in C_i$) means that $u_j$ ($\bar{u}_j$) appears as a literal in the $i$:th clause $C_i$ of $F$)
\begin{equation}
  \label{eq:E3}
  \bigcup_{i=1}^M
  \big\{ \{a_j, C_i\} : j \text{ is such that } u_j\in C_i \big\},
\end{equation}
and
\begin{equation}
  \label{eq:E4}
  \bigcup_{i=1}^M
  \big\{ \{\bar{a}_j, C_i\} : j \text{ is such that }  \bar{u}_j\in C_i \big\}.
\end{equation}
\end{subequations}

The distance data on $V$, $d_{0,V}=d_{0,V}(F)$, is defined to be
\begin{equation}
    \label{eq:distance-function-V}
    d_{0,V}(x,y) = 3\text{ for all } (x,y) \in V.
\end{equation}

\begin{proposition}\label{prop:reduction}
    Consider a Boolean formula $F$ in conjunctive normal form, and let $\mathcal{D}(F) = (X,V,E_0,E_1,d_{0,V})$ be the instance of the restricted inverse travel time problem for graphs (Problem~\ref{prob:restricted-inverse-travel-time}) constructed from $F$ as described above. Then there exists a solution to the instance $\mathcal{D}(F)$ of Problem~\ref{prob:restricted-inverse-travel-time}, if and only if formula~$F$ is satisfiable.
\end{proposition}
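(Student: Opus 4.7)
The plan is to prove the equivalence by first identifying which edges are forced to be present in (or absent from) any valid solution $E \subseteq E_1$, and then exhibiting an explicit correspondence between satisfying assignments and solutions. Since $E_1$ contains only four kinds of edges — from $\mathsf{TRUE}$ to a literal, inside a variable gadget $u_j - a_j - \bar{a}_j - \bar{u}_j$, or from an $a$-node to a clause vertex $C_i$ in which the corresponding literal appears — the local neighborhood of every vertex in $(X,E_1)$ can be enumerated explicitly, and so can all simple paths of length at most $3$ between the two relevant kinds of pairs.

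The first step is to show that any solution $E$ has very rigid local structure. For the pair $(u_j, \bar{u}_j)$, inspection of $E_1$ shows that the unique length-$2$ route between them passes through $\mathsf{TRUE}$ (requiring both $\{\mathsf{TRUE}, u_j\}$ and $\{\mathsf{TRUE}, \bar{u}_j\}$ to lie in $E$), and the unique length-$3$ route is the gadget path itself (requiring all three intra-gadget edges). Hence $d_E(u_j, \bar{u}_j) = 3$ forces the three edges $\{u_j, a_j\}, \{a_j, \bar{a}_j\}, \{\bar{a}_j, \bar{u}_j\}$ into $E$ and simultaneously forbids both $\mathsf{TRUE}$-edges from being in $E$. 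For the pair $(\mathsf{TRUE}, C_i)$ there is no direct edge and no length-$2$ path, because the $E_1$-neighbors of $\mathsf{TRUE}$ are literals while the $E_1$-neighbors of $C_i$ are $a$-nodes, and no vertex belongs to both classes; therefore $d_E(\mathsf{TRUE}, C_i) = 3$ forces the existence of some literal $\ell \in C_i$ such that both $\{\mathsf{TRUE}, \ell\}$ and $\{a_\ell, C_i\}$ lie in $E$, where $a_\ell := a_j$ if $\ell = u_j$ and $a_\ell := \bar{a}_j$ if $\ell = \bar{u}_j$.

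With these structural facts in hand both implications are short. For the forward direction, given a satisfying assignment $\alpha$, I would put all gadget edges into $E$, add exactly one $\mathsf{TRUE}$-edge per variable (to $u_j$ if $\alpha(u_j)$ is true, to $\bar{u}_j$ otherwise), and for each clause $C_i$ pick any literal $\ell_i \in C_i$ made true by $\alpha$ and add $\{a_{\ell_i}, C_i\}$; the path enumeration from the previous step then yields $d_E(u_j, \bar{u}_j) = 3$ and $d_E(\mathsf{TRUE}, C_i) = 3$ immediately. For the backward direction, given a valid $E$, I would define $\alpha(u_j)$ to be true iff $\{\mathsf{TRUE}, u_j\} \in E$ (and arbitrarily if neither $\mathsf{TRUE}$-edge is present); the clause constraint, via the consequence drawn above, supplies a literal in each $C_i$ whose $\mathsf{TRUE}$-edge is in $E$, i.e.\ a literal true under $\alpha$, so $F$ is satisfied.

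The main obstacle is making the path enumeration airtight, ensuring that no unintended short path sneaks in — for instance via a clause vertex (paths $u_j - a_j - C_i - v$ reaching $\bar{u}_j$ are ruled out because the only $E_1$-neighbors of $C_i$ are $a$-nodes, not literals) or across different variable gadgets. The case analysis is finite and mechanical, but it must be carried out carefully so that the forced conditions encode exactly ``select one literal per variable'' and ``every clause has a selected true literal,'' which are precisely the conditions for satisfiability of $F$.
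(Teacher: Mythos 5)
Your proposal is correct and takes essentially the same route as the paper: both arguments reduce to enumerating the length-$3$ paths in $(X,E_1)$ between $u_j$ and $\bar u_j$ and between $\mathsf{TRUE}$ and $C_i$, construct $E$ from the set of true literals in the forward direction, and read an assignment off the $\mathsf{TRUE}$-edges in the backward direction (your choice of one clause edge per clause, versus all of \eqref{eq:E3}--\eqref{eq:E4} in the paper, is immaterial). One phrasing to tighten: the constraint $d_E(u_j,\bar u_j)=3$ ``forbids both $\mathsf{TRUE}$-edges from being \emph{simultaneously} in $E$'' rather than excluding each individually --- exactly as your construction and your assignment rule in fact use it.
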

\begin{proof}
    Below we will identify a path with its vertex sequence, i.e., $P=x_1x_2 \cdots x_{n+1}$ with $x_i\in X$ is a path of length $n$ if there is an edge between $x_i$ and $x_{i+1}$ for all $i=1,2,\dots,n$. Also, we will denote the graph distance function on the graph $(X,E_1)$ by $d_{E_1}$.
    
    Note that $d_{E_1}(\mathsf{TRUE},C_i) = 3$, and that if $P$ is a path in $(X,E_1)$ that connects $\mathsf{TRUE}$ to $C_i$ and if $P$ is of length three, then there exists an index $j$ such that either
    \begin{equation}
        \label{eq:path1}
        u_j \in C_i \text{ and } P = \mathsf{TRUE}\,u_j\,a_j\,C_i,
    \end{equation}
    or
    \begin{equation}
        \label{eq:path2}
        \bar{u}_j \in C_i \text{ and } P = \mathsf{TRUE}\,\bar{u}_j\,\bar{a}_j\,C_i.
    \end{equation}
    
    Suppose first that formula~$F$ is satisfiable. Consider an assignment satisfying the formula, and let $\mathcal{T}$ be the set of true literals in the assignment (i.e., if $u_j$ is true (resp.\ false) in the assignment, then $u_j\in\mathcal{T}$ ($\bar{u}_j\in\mathcal{T}$)). Let $E=E(\mathcal{T})$ consist of the union of the sets
    \eqref{eq:E2}, \eqref{eq:E3}, \eqref{eq:E4}, and the set
    \begin{equation*}
        \big\{ \{\mathsf{TRUE}, \ell\} : \ell\in \mathcal{T} \big\}.
    \end{equation*}
    Then $E\subset E_1$, and the following hold:
    \begin{itemize}
    \item $d_E(u_j,\bar{u}_j) = 3$ for every $j$. Namely, the path $u_ja_j\bar{a}_j\bar{u}_j$ connects $u_j$ to $\bar{u}_j$ in $(X,E)$, and there are no shorter paths connecting those points in $(X,E)$, because in $(X,E)$ the vertex $\mathsf{TRUE}$ is connected to only one of $u_j$ and $\bar{u}_j$.
    \item $d_E(\mathsf{TRUE}, C_i) = 3$ for every $i$. Namely, for every $i$ there is a literal $\ell\in C_i$ such that $\ell\in\mathcal{T}$. Therefore, one of the paths in~\eqref{eq:path1} or~\eqref{eq:path2} is a path in $(X,E)$, also.
    \end{itemize}
    Consequently, $E$ is a solution to the instance $\mathcal{D}(F)$ of Problem~\ref{prob:restricted-inverse-travel-time}.
    
    Suppose then that the instance $\mathcal{D}(F)$ of Problem~\ref{prob:restricted-inverse-travel-time} has a solution $E$. Define an assignment of truth values by setting the variable $u_j$ to be true, if and only if there is no edge between $\mathsf{TRUE}$ and $\bar{u}_j$ in $(X,E)$.

    Fix $i$. Since $d_E(\mathsf{TRUE},C_i)=3$, there exists a path $P$ in $(X,E)$ for which either~\eqref{eq:path1} or~\eqref{eq:path2} holds. Let us consider the former case first. In this case there is no edge between $\bar{u}_j$ and $\mathsf{TRUE}$ (because this would imply $d_E(u_j,\bar{u}_j) = 2 \neq 3 = d_{0,V}(u_j,\bar{u}_j)$), so $u_j$ was assigned to be true in the assignment. Therefore, $C_i$ evaluates to true. In the latter case $u_j$ was assigned to be false in the assignment, and consequently $C_i$ evaluates to true also in this case. It follows that formula $F$ is satisfiable.
\end{proof}

\begin{proof}[Proof of Theorem~\ref{thm: NP complete}]
    The decision version of Problem~\ref{prob:restricted-inverse-travel-time} is in NP, because for a certificate for a `yes' answer (meaning that there exists a solution) we can take any solution $E$ itself. Namely, given an instance $(X,V,E_0,E_1,d_{0,V})$ of Problem~\ref{prob:restricted-inverse-travel-time} and a certificate $E$, we first check that $E_0\subset E \subset E_1$, and then that $d_E(x,y)=d_{0,V}(x,y)$ for all  $(x,y)\in V$. The latter check can be done in polynomial time by algorithms that are analogous to Algorithms~\ref{alg_paths} and~\ref{alg_classicaltest}. If both of these checks pass, we accept, otherwise we reject.

    The decision version of Problem~\ref{prob:restricted-inverse-travel-time} is NP-hard, because the NP-complete problem SAT~\cite{cook} is polynomial-time reducible to the problem. Namely, given a Boolean formula $F$ in conjunctive normal form, we can in polynomial time construct the instance $\mathcal{D}(F)$ of Problem~\ref{prob:restricted-inverse-travel-time} as described above. By Proposition~\ref{prop:reduction} this construction is a reduction.
\end{proof}

\appendix

\section{Numerical examples on inverse travel time problem for graphs in simple cases}

We have implemented Algorithm~\ref{alg:Grover-with-unknown-number-of-solutions} using the open-source toolkit Qiskit~\cite{Qiskit}, which is a software development kit for working with quantum computers. Our code is available on GitHub~\cite{Github_ILLOY2023}. We have also simulated our code with a classical computer in certain simple instances. 

Figure~\ref{fig:numerical-examples} shows the four instances (A--D) of Problem~\ref{prob:inverse-travel-time} that we tested our code with. In the largest instances C and D the quantum circuit consists of $24$ qubits, consequently the (complex) dimension of the state space $(\mathbb{C}^2)^{\otimes 24}$ of the quantum register is $2^{24}=16\,777\,216$. This exponential growth of the dimension of the state space is the limiting factor for the size of instances that can be simulated with a classical computer.

To test the quantum part of the algorithm (Algorithm~\ref{alg:Grover-L-iterations}), we simulated a quantum computer on a classical computer (Apple MacBook Air with M1 processor and 16 GB of RAM). In the simulation, we computed the state of the quantum register immediately prior to the measurement step in Algorithm~\ref{alg:Grover-L-iterations}. Algorithm~\ref{alg:Grover-with-unknown-number-of-solutions} calls Algorithm~\ref{alg:Grover-L-iterations} with various values of parameter $L$ (the number of Grover iterations), of those values, in the simulation we chose to use the one that maximizes the probability of obtaining a correct solution. Computation time for the simulations ranged from less than five seconds (instances A and B) to $118$~minutes (instance~C) to $829$~minutes (instance~D). Figure~\ref{fig:simulation-results} shows the results of the simulations.

\begin{figure}
    \centering
    \begin{subfigure}[b]{0.45\textwidth}
        \centering
        \begin{tikzpicture}
            \begin{scope}[thick]
                \draw (1.5,1.5) -- (0.0, 1.5); 
                \draw (1.5,1.5) -- (3.0, 1.5); 

                \path (0.0,0.0) -- (0.0, 2.6); 
            \end{scope}
            \draw[draw=gray, line width=1.2pt, fill=cyan!20] (0.0,1.5) circle [radius=0.35] node {$2$};

            \draw[draw=gray, line width=1.2pt, fill=cyan!20] (3.0,1.5) circle [radius=0.35] node {$1$};

            \draw[draw=gray, line width=1.2pt, fill=red!15] (1.5,1.5) circle [radius=0.35] node {$3$};
        \end{tikzpicture}
        \caption{}
        \label{fig:instance-A}
    \end{subfigure}
    \hfill
    \begin{subfigure}[b]{0.45\textwidth}
        \centering
        \begin{tikzpicture}
            \begin{scope}[thick]
                \draw (1.0,1.0) -- (2.0, 2.0);
                \draw (1.0,1.0) -- (0.0, 2.0);
                \draw (1.0,1.0) -- (0.0, 0.0);
                \draw (0.0,2.0) -- (0.0, 0.0);
            \end{scope}
            \draw[draw=gray, line width=1.2pt, fill=cyan!20] (2.0,2.0) circle [radius=0.35] node {$1$};

            \draw[draw=gray, line width=1.2pt, fill=cyan!20] (0.0,2.0) circle [radius=0.35] node {$2$};

            \draw[draw=gray, line width=1.2pt, fill=cyan!20] (0.0,0.0) circle [radius=0.35] node {$3$};

            \draw[draw=gray, line width=1.2pt, fill=red!15] (1.0,1.0) circle [radius=0.35] node {$4$};
        \end{tikzpicture}
        \caption{}
        \label{fig:instance-B}
    \end{subfigure}
    \hfill
    \begin{subfigure}[b]{0.45\textwidth}
        \centering
        \begin{tikzpicture}
            \begin{scope}[thick]
                \draw (1.0,1.0) -- (2.0, 2.0);
                \draw (1.0,1.0) -- (0.0, 2.0);
                \draw[dashed] (1.0,1.0) -- (0.0, 0.0);
                \draw[dashed] (1.0,1.0) -- (2.0, 0.0); 

                \draw (0.0,2.0) -- (0.0, 0.0);
                \draw (0.0,0.0) -- (2.0, 0.0);
                \draw (2.0,0.0) -- (2.0, 2.0);
            \end{scope}
            \draw[draw=gray, line width=1.2pt, fill=cyan!20] (2.0,2.0) circle [radius=0.35] node {$1$};

            \draw[draw=gray, line width=1.2pt, fill=cyan!20] (0.0,2.0) circle [radius=0.35] node {$2$};

            \draw[draw=gray, line width=1.2pt, fill=cyan!20] (0.0,0.0) circle [radius=0.35] node {$3$};

            \draw[draw=gray, line width=1.2pt, fill=cyan!20] (2.0,0.0) circle [radius=0.35] node {$4$};

            \draw[draw=gray, line width=1.2pt, fill=red!15] (1.0,1.0) circle [radius=0.35] node {$5$};
        \end{tikzpicture}
        \caption{}
        \label{fig:instance-C}
    \end{subfigure}
    \hfill
    \begin{subfigure}[b]{0.45\textwidth}
        \centering
        \begin{tikzpicture}
            \begin{scope}[thick]
                \draw (1.0,1.0) -- (2.0, 2.0);
                \draw (1.0,1.0) -- (0.0, 2.0);
                \draw (1.0,1.0) -- (0.0, 0.0);
                \draw (1.0,1.0) -- (2.0, 0.0);
            \end{scope}
        
\draw[draw=gray, line width=1.2pt, fill=cyan!20] (2.0,2.0) circle [radius=0.35] node {$1$};

            \draw[draw=gray, line width=1.2pt, fill=cyan!20] (0.0,2.0) circle [radius=0.35] node {$2$};

            \draw[draw=gray, line width=1.2pt, fill=cyan!20] (0.0,0.0) circle [radius=0.35] node {$3$};

            \draw[draw=gray, line width=1.2pt, fill=cyan!20] (2.0,0.0) circle [radius=0.35] node {$4$};

            \draw[draw=gray, line width=1.2pt, fill=red!15] (1.0,1.0) circle [radius=0.35] node {$5$};
        \end{tikzpicture}
        \caption{}
        \label{fig:instance-D}
    \end{subfigure}
    \hfill
    \begin{subfigure}[t]{0.65\textwidth}
        \centering
        \begin{tabular}{ c c | c | c | c | c}
            $b_1$ & $b_2$ & $d_A(b_1,b_2)$ & $d_B(b_1,b_2)$ & $d_C(b_1,b_2)$ & $d_D(b_1,b_2)$\\
            \hline
            $1$&  $2$& $2$& $2$& $2$& $2$\\
            $1$&  $3$& $-$& $2$& $2$& $2$\\
            $1$&  $4$& $-$& $-$& $1$& $2$\\
            
            $2$&  $3$& $-$& $1$& $1$& $2$\\
            $2$&  $4$& $-$& $-$& $2$& $2$\\

            $3$&  $4$& $-$& $-$& $1$& $2$\\
        \end{tabular}
        \caption{Boundary distance data for instances~A--D. The two leftmost columns contain the labels of vertices ${b_1,b_2}\in B$, the other columns contain the distances $d(b_1,b_2)$ for the four different instances.}
        \label{table:distance-data}
    \end{subfigure}
       \caption{Figures~{(A)--(D)} depict the four instances (labeled \mbox{A--D}, respectively) of Problem~\ref{prob:inverse-travel-time} that we consider. The boundary vertices $b_j\in B$ are drawn in blue; the boundary distances are tabulated in {(E)}. The dashed edges in {(C)} are edges that do not affect the boundary distances, thus this instance has a nonunique solution.}
       \label{fig:numerical-examples}
\end{figure}
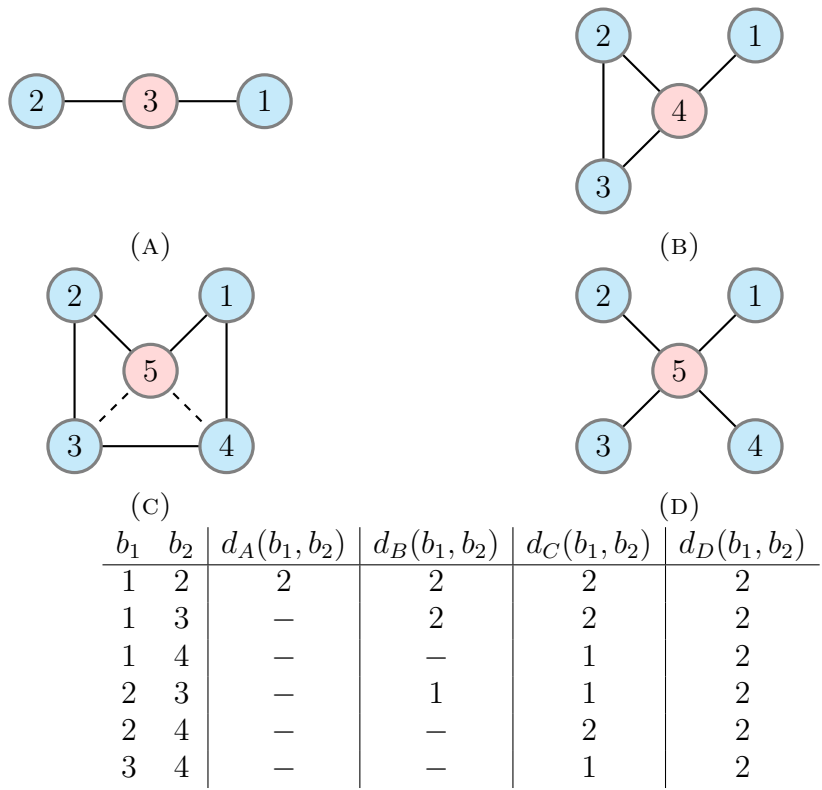

\begin{figure}
    \begin{adjustwidth}{-1.9cm}{-1.9cm}
    \centering
    \begin{subfigure}[b]{0.85\textwidth}
        \centering
        \includegraphics[width=\textwidth]{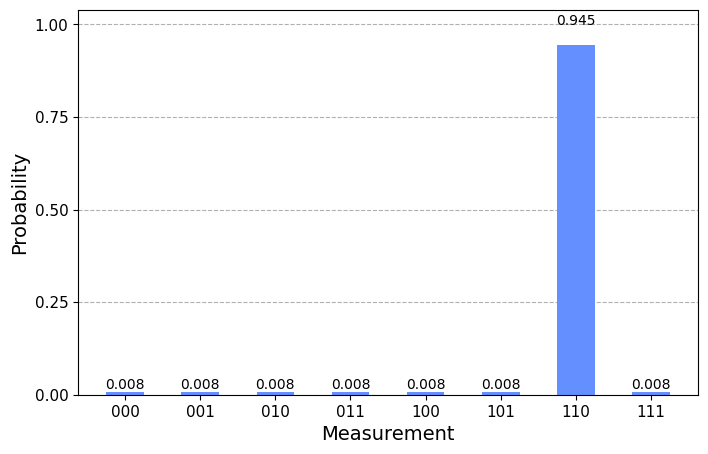}
        \caption{}
        \label{A}
    \end{subfigure}
    \hfill
    \begin{subfigure}[b]{0.335\textwidth}
        \centering
        \includegraphics[width=\textwidth]{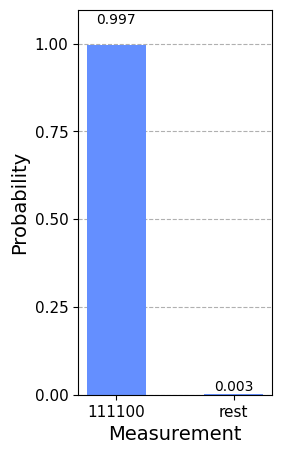}
        \caption{}
        \label{B}
    \end{subfigure}
   
    \begin{subfigure}[b]{0.85\textwidth}
        \centering
        \includegraphics[width=\textwidth]{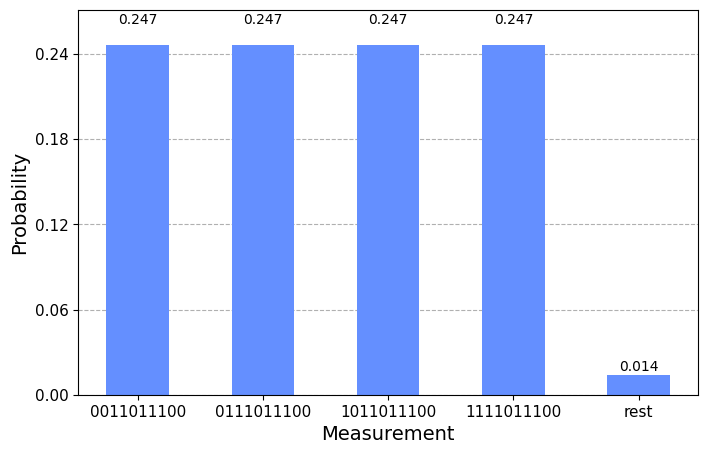}
        \caption{}
        \label{C}
    \end{subfigure}
    \hfill
    \begin{subfigure}[b]{0.335\textwidth}
        \centering
        \includegraphics[width=\textwidth]{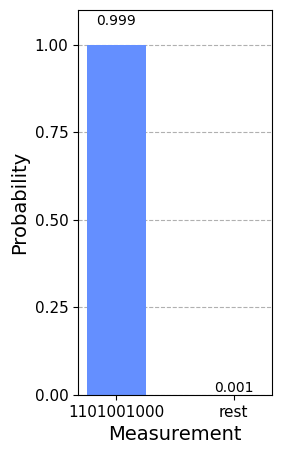}
        \caption{}
        \label{D}
    \end{subfigure}
    
    \vspace{0.2cm}

    \begin{subfigure}[b]{1\textwidth}
        \centering
        \begin{tabular}{ c | c | c | c | c }
            instance & $L$ & $n$ qubits & dim.\ of state space \\
            \hline
            A &   $2$ &  $8$ & $256$\\
            B &  $6$ &  $15$ & $32\,768$\\
            C &  $13$ & $24$ & $16\,777\,216$\\
            D &  $25$ & $24$ & $16\,777\,216$\\
        \end{tabular}
        \caption{}
    \end{subfigure}
       \caption{Plots~{(A)--(D)} display simulation results for instances {A--D}, respectively. In {(A)} the probability of measurement for each value of $\vec{e}$ (the adjacency matrix of a $3$-vertex graph) is shown. In {(B)--(D)} the probabilities of those values of $\vec{e}$ that correspond to a solution of the instance are shown; ''rest`` is the sum of the probabilities of those values that are not a solution. The bitstring denoting the measurement result consists of the elements of $e(j,k)$, $1\le j<k\le n$, of $\vec{e}$  in reverse lexicographical order so that, e.g., in the case $n=3$ the string corresponds to $e(2,3)\,e(1,3)\,e(1,2)$. Table~{(E)} shows the value of parameter $L$ (number of Grover iterations), the number of qubits (including ancillae), and the dimension of the state space of the qubit register, for each of the instances~{A--D}.}
       \label{fig:simulation-results}
    \end{adjustwidth}
\end{figure}

\clearpage

\section{Grover's algorithm}
\label{app: Grover}

We give a review of Grover's algorithm with main focus on the case where the number of solutions is unknown. The approach here follows the outline explained in~\cite{Aaronson}, for other approaches, see~\cite{boyer1998tight} or~\cite[Section~6.3]{MR1796805}. Grover's original analysis~\cite{MR1427516} is for the case of a unique solution.

As in Section~\ref{sec:grover}, let $N$ be a positive integer, $T\subset\{0,1\}^N$ a subset of solutions and $F=\{0,1\}^N \setminus T$ its complement, and $O_T$ a quantum oracle for the set $T$. Assume that $O_T$ uses $k$ ancillae and recall that the action of $O_T$ is given by~\eqref{eq:oracle-operator}. 

Note that if the target qubit $\kkket{r}$ in~\eqref{eq:oracle-operator} is prepared at the superposition $(\kkket{0}-\kkket{1}) / \sqrt{2}$ instead of $\kkket{0}$ or $\kkket{1}$, then $O_T$ acts as
\begin{equation*}
    O_T \left(\kkket{x}\otimes\frac{\kkket{0}-\kkket{1}}{\sqrt{2}}\otimes\kkket{0}_k\right)
    = \Big(\widetilde{O}_T\kkket{x}\Big) \otimes\frac{\kkket{0}-\kkket{1}}{\sqrt{2}}\otimes\kkket{0}_k,
\end{equation*}
where the operator $\widetilde{O}_T$ acts on a computational basis state $\kkket{x}\in(\C^2)^{\otimes N}$ as
\begin{equation*}
    \widetilde{O}_T \kkket{x}=
    \begin{cases}
        -\kkket{x}, & x \in T,\\
        \phantom{-}\kkket{x}, & x \in F.
    \end{cases}
\end{equation*}
The operator $\widetilde{O}_T$ is called a \emph{phase oracle}.

If necessary, by increasing the number $k$ of ancillae for the oracle $O_T$, the diffusion subroutine can be assumed to operate with the same $k$ ancillae as $O_T$. The diffusion subroutine acts on a computational basis state of the form $\kkket{x}\otimes\kkket{r}\otimes\kkket{0}_k$ as
\begin{equation*}
    \kkket{x} \otimes \kkket{r} \otimes \kkket{0}_k \mapsto 
    \Big(D_N\kkket{x}\Big) \otimes \kkket{r} \otimes \kkket{0}_k,
\end{equation*}
where
\begin{equation*}
  D_N = 2\kkket{\gamma_N}\bbbra{\gamma_N} - \mathrm{id}^{\otimes N},
\end{equation*}
and $\gamma_N$ is as in~\eqref{eq:gamma-N}.

The composition of the oracle and the diffusion subroutine is called \emph{Grover iteration} and it is denoted by $G_N$; it acts as
\begin{equation}
  \label{eq:grover-iteration-operator}
  G_N\left( 
  \kkket{\psi} \otimes \frac{\kkket{0}-\kkket{1}}{\sqrt{2}} \otimes \kkket{0}_k \right)
  = \Big(D_N \widetilde{O}_T\,\kkket{\psi}\Big) \otimes \frac{\kkket{0}-\kkket{1}}{\sqrt{2}} \otimes \kkket{0}_k,
\end{equation}
where $\kkket{\psi} \in (\C^2)^{\otimes N}$ is arbitrary.

Below is a description of Grover's algorithm. An essential question is how many times the Grover iteration should be applied in the algorithm, and this in turn depends on whether the number of solutions $|T|$ is \emph{a priori} known. In Algorithm~\ref{alg:Grover-L-appendix} below we choose to take the number $L$ of Grover iterations to be applied as an input to the algorithm; in Proposition~\ref{prop:Grover-with-known-number-of-solutions} we will show how to choose $L$ if $|T|$ is known, and in Algorithm~\ref{alg:Grover-unknown-appendix} and Proposition~\ref{prop:Grover-with-unknown-number-of-solutions} we will address the case where $|T|$ is unknown. In Algorithm~\ref{alg:Grover-L-appendix} below the $\kkket{\xi}$'s in the comments describe the state of the quantum register at various points during the execution of the algorithm, they will be needed in Proposition~\ref{prop:Grover-evolution-of-state}.

\begin{algorithm}[H]
    \caption{Grover's algorithm with $L$ iterations}
    \label{alg:Grover-L-appendix}
    \begin{algorithmic}
    \Require \Register{Parameter}{$L\in\mathbb{N}$}
    \State initialize all qubits to $|0\rangle$
    \LComment{$\kkket{\xi^{(\mathrm{init})}} := \kkket{0}_N\otimes\kkket{0}\otimes\kkket{0}_k$}
    \State apply the NOT gate $X$ to the $(N+1)$:th qubit
    \State apply the Hadamard gate $H$ to the first $N+1$ qubits
    \LComment{
    $\kkket{\xi^{(0)}} = \big( H^{\otimes N} \otimes HX \otimes \mathrm{id}^{\otimes k} \big)\,\kkket{\xi^{(\mathrm{init})}}$
    }
    \For{$\ell=1,2,\dots,L$}
        \State apply the Grover iteration to the register
        \LComment{
        $\kkket{\xi^{(\ell)}} = G_N \,\kkket{\xi^{(\ell-1)}},\,
        \ell=1,\dots,L$
        }
    \EndFor
    \State \Output measurement of the $N$ first qubits
\end{algorithmic}
\end{algorithm}

Following proposition describes the evolution of the state of the
quantum register during Grover's algorithm:
\begin{proposition}
  \label{prop:Grover-evolution-of-state}
  Assume that the number $|T|$ of solutions satisfies $0<|T|<2^N$. Define
  \begin{equation*}
    \kkket{\alpha} :=
    \frac{1}{\sqrt{|F|}} \sum_{x\in F} \kkket{x}
    \text{ and }
    \kkket{\beta} :=
    \frac{1}{\sqrt{|T|}} \sum_{x\in T} \kkket{x},
  \end{equation*}
  and let $\theta = \theta(|T|,N) \in (0, \pi/2)$ be the number
  defined by
  \begin{equation}
    \label{eq:Grover-theta}
    \sin\theta = \frac{\sqrt{|T|}}{\sqrt{2^N}}.
  \end{equation}
  Let $(\kkket{\xi^{(\ell)}})_{\ell=0}^L$ be the sequence of states of the quantum register during the execution of Algorithm~\ref{alg:Grover-L-appendix}.
  \begin{enumerate}
  \item For $\ell\ge 0$ it holds that
    \begin{equation}
      \label{eq:xi-r}
      \kkket{\xi^{(\ell)}}
      = \Big( \cos\big((2\ell+1)\,\theta\big) \,\kkket{\alpha}
      +\sin\big((2\ell+1)\,\theta\big) \,\kkket{\beta}\Big)
      \otimes \frac{\kkket{0}-\kkket{1}}{\sqrt{2}}
      \otimes\kkket{0}_k.
    \end{equation}
  \item The probability that Grover's algorithm with $L$ iterations
    outputs a correct solution is $\sin^2((2L+1)\theta)$. 
\end{enumerate}
\end{proposition}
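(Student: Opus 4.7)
The plan is to prove part (1) by induction on $\ell$, exploiting the classical observation that a Grover iteration restricted to the two-dimensional real subspace $V := \mathrm{span}_{\R}\{|\alpha\rangle,|\beta\rangle\}$ is a product of two reflections and hence a rotation; part (2) will then be immediate from~\eqref{eq:xi-r}.

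For the base case $\ell=0$ I would trace the initialization steps of Algorithm~\ref{alg:Grover-L-appendix}. After the NOT gate the $(N+1)$-th qubit is in $|1\rangle$, and applying Hadamard to the first $N+1$ qubits produces $|\gamma_N\rangle\otimes\frac{|0\rangle-|1\rangle}{\sqrt 2}\otimes|0\rangle_k$. Splitting the sum defining $|\gamma_N\rangle$ according to the partition $\{0,1\}^N = F\cup T$ gives $|\gamma_N\rangle = \sqrt{|F|/2^N}\,|\alpha\rangle + \sqrt{|T|/2^N}\,|\beta\rangle$, which by~\eqref{eq:Grover-theta} together with $|F|+|T|=2^N$ is exactly $\cos\theta\,|\alpha\rangle + \sin\theta\,|\beta\rangle$. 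This matches~\eqref{eq:xi-r} at $\ell=0$.

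For the inductive step I would verify directly that $D_N\widetilde O_T$, viewed as an operator on $V$ in the ordered basis $(|\alpha\rangle,|\beta\rangle)$, is the rotation by angle $2\theta$. The phase oracle acts on $V$ as the reflection $\mathrm{diag}(1,-1)$; using $|\gamma_N\rangle = \cos\theta\,|\alpha\rangle + \sin\theta\,|\beta\rangle$ in the formula $D_N = 2|\gamma_N\rangle\langle\gamma_N| - \mathrm{id}^{\otimes N}$ shows that $D_N|_V$ is the reflection about the line $\R|\gamma_N\rangle$, so the composition is a rotation by twice the angle between the two reflection axes, which is $2\theta$. Applied to $\cos\phi\,|\alpha\rangle+\sin\phi\,|\beta\rangle$ with $\phi=(2\ell+1)\theta$, this yields $\cos((2\ell+3)\theta)|\alpha\rangle+\sin((2\ell+3)\theta)|\beta\rangle$. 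Combined with~\eqref{eq:grover-iteration-operator}, which guarantees that the target qubit and the ancillae factor out as $\frac{|0\rangle-|1\rangle}{\sqrt 2}\otimes|0\rangle_k$ and are returned untouched by $G_N$, this propagates~\eqref{eq:xi-r} from $\ell$ to $\ell+1$.

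Part (2) then follows from~\eqref{eq:xi-r} at $\ell=L$: the first $N$ qubits carry the state $\cos((2L+1)\theta)|\alpha\rangle + \sin((2L+1)\theta)|\beta\rangle$, and since $\langle x|\alpha\rangle=0$ and $|\langle x|\beta\rangle|^2 = 1/|T|$ for $x\in T$, summing $|\langle x|\xi^{(L)}\rangle|^2$ over $x\in T$ yields probability $\sin^2((2L+1)\theta)$. There is no serious obstacle in the argument; the only point requiring genuine care is the bookkeeping of tensor factors, namely ensuring that the rotation on $V$ really is induced by the full operator $G_N$ acting on the entire register. This is precisely the content of~\eqref{eq:grover-iteration-operator}, so no additional work is needed beyond invoking it at each inductive step.
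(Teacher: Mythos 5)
Your proposal is correct and follows essentially the same route as the paper: compute $\kkket{\xi^{(0)}}=\kkket{\gamma_N}\otimes\frac{\kkket{0}-\kkket{1}}{\sqrt 2}\otimes\kkket{0}_k$ with $\kkket{\gamma_N}=\cos\theta\,\kkket{\alpha}+\sin\theta\,\kkket{\beta}$, show that $D_N\widetilde O_T$ preserves the plane $\mathrm{span}\{\kkket{\alpha},\kkket{\beta}\}$ and acts there as a rotation by $2\theta$, induct, and read off the measurement probability from the coefficient of $\kkket{\beta}$. The only (harmless) difference is that you identify the rotation angle via the general ``product of two reflections'' fact, whereas the paper computes $D_N\widetilde O_T\kkket{\alpha}$ and $D_N\widetilde O_T\kkket{\beta}$ explicitly to exhibit the rotation matrix.
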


\begin{proof}
  A calculation shows that
  \begin{equation*}
    \begin{split}
      \kkket{\xi^{(0)}}
      &= \big(H^{\otimes N} \otimes HX \otimes \mathrm{id}^{\otimes k})\kkket{0}_{N+1+k}\\
      &= \underbrace{\left(\frac{ \kkket{0} + \kkket{1}}{\sqrt{2}}\right)
        \otimes \cdots \otimes \left(\frac{ \kkket{0} + \kkket{1}}{\sqrt{2}}\right)}_{\textrm{$N$ times}} 
        \otimes \frac{\kkket{0}-\kkket{1}}{\sqrt{2}}
        \otimes \kkket{0}_k\\
      &= \kkket{\gamma_N} 
        \otimes \frac{\kkket{0}-\kkket{1}}{\sqrt{2}}
        \otimes \kkket{0}_k,
    \end{split}
  \end{equation*}
  and by~\eqref{eq:grover-iteration-operator} for $\ell=1,2,\ldots,L$,
  \begin{equation*}
    \begin{split}
    \kkket{\xi^{(\ell)}} 
    &= (G_N)^\ell \left(\kkket{\gamma_N}\otimes\frac{\kkket{0}-\kkket{1}}{\sqrt{2}}\otimes\kkket{0}_k\right)\\
    &= \Big((D_N\widetilde{O}_T)^\ell\kkket{\gamma_N}\Big) \otimes \frac{\kkket{0}-\kkket{1}}{\sqrt{2}} \otimes \kkket{0}_k.
    \end{split}
  \end{equation*}
  Therefore, in order to prove~\eqref{eq:xi-r}, it suffices to prove
  that
  \begin{equation}
    \label{eq:gamma-r}
    (D_N\widetilde{O}_T)^\ell\kkket{\gamma_N} = \cos\big((2\ell+1)\,\theta\big)\,\kkket{\alpha}
    +\sin\big((2\ell+1)\,\theta\big) \,\kkket{\beta}
\end{equation}
  for all $\ell\geq0$.

  Since $|F|+|T| = 2^N$ and $0<\theta<\pi/2$, it follows that
  \begin{equation*}
    \cos\theta = \sqrt{1-\sin^2\theta} = \frac{\sqrt{|F|}}{\sqrt{2^N}}.
  \end{equation*}
  Therefore,
  \begin{equation}
    \label{eq:gamma-alpha-beta}
    \kkket{\gamma_N}
    = \frac{1}{\sqrt{2^N}}\sum_{x\in F} \kkket{x} + \frac{1}{\sqrt{2^N}}\sum_{x\in T} \kkket{x}
    = \cos\theta\, \kkket{\alpha} + \sin\theta\, \kkket{\beta},
  \end{equation}
  and~\eqref{eq:gamma-r} holds if $\ell=0$.

  Let us consider the case $\ell\ge 1$. Observe that the plane spanned by
  the orthonormal vectors $\kkket{\alpha}$ and $\kkket{\beta}$ is an
  invariant subspace under the mapping $\widetilde{O}_T$ (since
  $\widetilde{O}_T\kkket{\alpha}=\kkket{\alpha}$ and
  $\widetilde{O}_T\kkket{\beta} = -\kkket{\beta}$), and it is also an invariant
  subspace under the mapping $\kkket{\gamma_N}\bbbra{\gamma_N}$
  (because of~\eqref{eq:gamma-alpha-beta}). Therefore, it is an
  invariant subspace under the mapping $D_N\widetilde{O}_T$, too.  In fact, as we
  will next prove, with respect to the basis
  $(\kkket{\alpha},\kkket{\beta})$ the operator $D_N\widetilde{O}_T$ acts as a
  rotation by $2\theta$:
  \begin{equation*}
    D_N \widetilde{O}_T \big(a\kkket{\alpha} + b\kkket{\beta}\big)
    = a' \kkket{\alpha} + b'\kkket{\beta},
  \end{equation*}
  where the coefficients ${a,b}\in\C$ and ${a',b'}\in\C$ are related
  by
  \begin{equation}
    \label{eq:rotation-matrix}
    \begin{bmatrix}
      a' \\ b'
    \end{bmatrix}
    =
    \begin{bmatrix}
      \cos(2\theta)    & -\sin(2\theta)\\
      \sin(2\theta)    & \phantom{-}\cos(2\theta)
    \end{bmatrix}
    \begin{bmatrix}
      a \\ b
    \end{bmatrix}.
  \end{equation}

  Note that $\bbbraket{\gamma_N}{\alpha} =
  \cos\theta$. Consequently, with~\eqref{eq:gamma-alpha-beta} and the
  double angle formulas for the trigonometric functions we obtain
  \begin{equation*}
    \begin{split}
      D_N \widetilde{O}_T\kkket{\alpha} 
      &= 2\cos\theta\,\kkket{\gamma_N} - \kkket{\alpha}\\
      &= (2\cos^2\theta - 1)\,\kkket{\alpha} + 2\cos\theta\sin\theta\,\kkket{\beta}\\
      &= \cos(2\theta)\,\kkket{\alpha} + \sin(2\theta)\,\kkket{\beta}.
    \end{split}
  \end{equation*}
  This determines the first column of the matrix
  in~\eqref{eq:rotation-matrix}, and an analogous calculation for
  $D_N \widetilde{O}_T\kkket{\beta}$ determines the second column.

  Now~\eqref{eq:gamma-r} for $\ell=1,2,\ldots$ can be proved by induction
  on $\ell$ using~\eqref{eq:rotation-matrix} and the angle sum formulas
  for the trigonometric functions.

    Finally, for $x\in T \subset\{0,1\}^N$, $y\in\{0,1\}$ and $z\in\{0,1\}^k$ we have
  \begin{equation*}
    \bbbraket{xyz}{\xi^{(L)}}
    = 
    \begin{cases}
      0, \text{ if } z \neq 0_k,\\
      \frac{(-1)^y}{\sqrt{2}}\sin((2L+1)\theta) / \sqrt{|T|}, \text{ if } z = 0_k,
    \end{cases}
  \end{equation*}
  where $0_k\in\{0,1\}^k$ is the zero vector and $xyz\in\{0,1\}^{N+1+k}$ is the concatenation of $x$, $y$ and $z$. Therefore, the probability that the result $x$ of the measurement of the first $N$ qubits belongs to $T$ is 
    \begin{equation*}
        \sum_{(x,y,z) \in T \times \{0,1\}^{k+1}} \big|\bbbraket{xyz}{\xi^{(L)}}\big|^2
        = |T| \, \frac{\sin^2((2L+1)\theta)}{|T|}= \sin^2((2L+1)\theta).\qedhere
    \end{equation*}
\end{proof}

Following well-known proposition shows that if $|T|$ is known, then with an appropriate choice of $L$ Algorithm~\ref{alg:Grover-L-appendix} finds a solution with probability at least $1/2$.

\begin{proposition}
  \label{prop:Grover-with-known-number-of-solutions}
  Suppose that the number of solutions $|T|$ is known and that
  $0<|T|<2^N$. Let $\theta$ be defined by~\eqref{eq:Grover-theta} and $L = L(\theta)$ by~\eqref{eq:Grover-L-theta}.

  With probability at least $1/2$ Grover's algorithm with $L$
  iterations (Algorithm~\ref{alg:Grover-L-appendix}) outputs a solution $x\in T$. Furthermore, the algorithm
  queries the oracle at most $\sqrt{2^N / |T|}$ times.
\end{proposition}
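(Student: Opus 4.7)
The plan is to invoke part (2) of Proposition~\ref{prop:Grover-evolution-of-state}, which identifies the success probability of Algorithm~\ref{alg:Grover-L-appendix} as $\sin^2((2L+1)\theta)$, with $\theta = \theta(|T|,N) \in (0,\pi/2)$ determined by~\eqref{eq:Grover-theta}. Since each Grover iteration contains exactly one oracle query, the number of queries made by the algorithm is $L$. The proposition therefore reduces to verifying, for the specific choice $L=L(\theta)$ given in~\eqref{eq:Grover-L-theta}, the two inequalities
\[
\sin^2\big((2L+1)\theta\big) \ge \tfrac{1}{2}
\qquad\text{and}\qquad
L \le \sqrt{2^N/|T|} = 1/\sin\theta.
\]

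I would proceed by the three-case analysis built into the definition~\eqref{eq:Grover-L-theta}. In the easy case $\theta \in [\pi/4, \pi/2)$ we have $L=0$, so the success probability is $\sin^2\theta \ge \sin^2(\pi/4) = 1/2$, and the query bound $0 \le 1/\sin\theta$ is trivial. In the case $\theta \in [\pi/8, \pi/4)$ we have $L=1$, so the relevant argument is $(2L+1)\theta = 3\theta \in [3\pi/8, 3\pi/4)$; on this interval $\sin^2$ is bounded below by its limiting value $1/2$ at the right endpoint, giving $\sin^2(3\theta) \ge 1/2$, while $L=1 \le \sqrt{2} = 1/\sin(\pi/4) \le 1/\sin\theta$.

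The slightly more delicate case is $\theta \in (0, \pi/8)$, where $L = \lceil \pi/(4\theta) - 1/2 \rceil$. From the ceiling I would extract the two-sided bound $\pi/(4\theta) - 1/2 \le L < \pi/(4\theta) + 1/2$, which rearranges to $(2L+1)\theta \in [\pi/2,\, \pi/2 + 2\theta)$. Since $2\theta < \pi/4$, this interval sits inside $[\pi/2, 3\pi/4)$, where again $\sin^2 \ge 1/2$. For the query count, I would combine $L \le \pi/(4\theta) + 1/2$ with the elementary estimate $\sin\theta \le \theta$ (so that $1/\sin\theta \ge 1/\theta$); it then suffices to verify $\pi/(4\theta) + 1/2 \le 1/\theta$, i.e.\ $\pi/4 + \theta/2 \le 1$, which holds for $\theta < \pi/8$ since $\pi/4 + \pi/16 = 5\pi/16 < 1$.

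The only real obstacle is bookkeeping in this third case: the choice of $L(\theta)$ is engineered precisely so that $(2L+1)\theta$ lands in a window around $\pi/2$ without overshooting past $3\pi/4$ (which would drop the success probability below $1/2$), while at the same time $L$ stays below $1/\sin\theta$. The three pieces of the piecewise definition of $L(\theta)$ exist exactly to handle the regimes where the naive round-to-nearest formula either becomes unnecessary or would cease to satisfy one of the two required bounds, and once the ranges are matched as above both inequalities follow from elementary trigonometric estimates.
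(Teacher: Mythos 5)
Your proof is correct and follows essentially the same route as the paper: both reduce the claim to showing $(2L+1)\theta \in [\pi/4, 3\pi/4)$ via Proposition~\ref{prop:Grover-evolution-of-state} and to bounding $L \le 1/\sin\theta$. The paper simply asserts that the choice~\eqref{eq:Grover-L-theta} yields these bounds, whereas you carry out the three-case verification explicitly; your details check out.
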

\begin{proof}
  Grover's algorithm with $L$ iterations produces a correct solution
  with probability $\sin^2((2L+1)\theta)$ by
  Proposition~\ref{prop:Grover-evolution-of-state}, and the
  choice~\eqref{eq:Grover-L-theta} implies that
  \begin{equation*}
    \frac{\pi}{4} \le (2L+1)\theta < \frac{3\pi}{4}.
  \end{equation*}
  This implies $\sin^2((2L+1)\theta) \ge 1/2$.

  The number of queries the algorithm makes is $L$, which implies the
  desired bound.
\end{proof}

The search problem is most interesting when $|T|\ll 2^N$. In this case
the angle $\theta$ defined by~\eqref{eq:Grover-theta} is small and the
choice~\eqref{eq:Grover-L-theta} implies that
$(2L+1)\theta \approx \pi / 2$. Therefore, if $0<|T|\ll 2^N$ and $L$
is as in~\eqref{eq:Grover-L-theta}, then Grover's algorithm with $L$
iterations finds a solution with probability close to one.

Following variation of Grover's algorithm applies to the case where
the number of solutions to the search problem is unknown. The
algorithm consists of a repeated application of Grover's algorithm
with $L$ iterations as a subroutine, where the value of $L$ is
varied. After each subroutine the obtained value $x$ is tested for
membership in $T$, and if $x\in T$, then $x$ is returned and the
algorithm is terminated. The test for membership is thought of as a
query to a classical oracle, and the query complexity of the algorithm
is the total number of queries to the quantum and classical oracles.

\begin{algorithm}[H]
    \caption{Grover's algorithm for the search problem with an unknown number of solutions}
    \label{alg:Grover-unknown-appendix}
    \begin{algorithmic}
    \For{$L=0,1,2$}
        \State apply Algorithm~\ref{alg:Grover-L-appendix} with parameter $L$ to obtain $x\in\{0,1\}^N$
        \If{$x\in T$}
            \State \Output $x$ and halt
        \EndIf
    \EndFor
    \For{$K=2^{N-4},2^{N-5},\dots,2,1$}
        \State find $\theta_K\in(0,\pi/2)$ such that $\sin\theta_K = \sqrt{K / 2^N}$ 
        \State $L\gets$ \Call{$L$}{$\theta_K$} \Comment{$L(\theta)$ is defined in~\eqref{eq:Grover-L-theta}}
        \State apply Algorithm~\ref{alg:Grover-L-appendix} with parameter $L$ to obtain $x\in\{0,1\}^N$
        \If{$x\in T$}
            \State \Output $x$ and halt
        \EndIf
    \EndFor
    \State \Output ``no solution found''
\end{algorithmic}
\end{algorithm}

Following proposition shows that if there are
solutions, then Algorithm~\ref{alg:Grover-unknown-appendix} finds a solution with probability at least $1/2$:
\begin{proposition}
  \label{prop:Grover-with-unknown-number-of-solutions}
  Consider Grover's algorithm for the search problem with an unknown number of solutions (Algorithm~\ref{alg:Grover-unknown-appendix}).
  \begin{enumerate}
  \item\label{prop:Grover-unknown-item-1} The algorithm queries the
    quantum and classical oracles in total $O(2^{N/2})$ times.
  \item\label{prop:Grover-unknown-item-2} If $|T|>0$, then with
    probability at least $1/2$ the algorithm outputs a solution
    $x \in T$ after querying the oracles at most $C\sqrt{2^N / |T|}$
    times (where $C>0$ is a universal constant).
  \end{enumerate}
\end{proposition}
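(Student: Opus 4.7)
The plan is to exploit Proposition~\ref{prop:Grover-evolution-of-state}: one call to Algorithm~\ref{alg:Grover-L-appendix} with parameter $L$ uses $L$ oracle queries and returns a solution with probability $\sin^2((2L+1)\theta)$, where $\theta\in(0,\pi/2)$ satisfies $\sin\theta=\sqrt{t/2^N}$ with $t=|T|$. I would first note that the definition~\eqref{eq:Grover-L-theta} of $L(\theta)$ is engineered so that, when $\theta<\pi/8$, one has $L(\theta)\le\pi/(4\theta)+1/2$ and $(2L(\theta)+1)\theta\in[\pi/2,\pi/2+2\theta]\subset[\pi/2,3\pi/4]$; these two facts underlie everything that follows.

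For (1), I would count queries loop-by-loop. The first \texttt{for}-loop costs $0+1+2=3$ queries. In the second \texttt{for}-loop every $K=2^j$, $j=0,\dots,N-4$, satisfies $\sin\theta_K=\sqrt{K/2^N}\le 1/4<\sin(\pi/8)$, so using $\sin\theta_K\le\theta_K$ one obtains $L(\theta_K)\le C\sqrt{2^N/K}$. Summing the resulting geometric series gives
\begin{equation*}
\sum_{j=0}^{N-4}L(\theta_{2^j})\le C\sqrt{2^N}\sum_{j=0}^{N-4}2^{-j/2}=O(2^{N/2}),
\end{equation*}
which together with the $O(N)=O(2^{N/2})$ classical verifications proves (1).

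For (2), I would split on whether $t>2^{N-4}$ or not. When $t>2^{N-4}$, one has $\sin\theta>1/4$, so $\theta>\pi/20$; the plan is to verify by a direct case analysis that the three choices $L=0,1,2$ in the first loop respectively give $\sin^2((2L+1)\theta)\ge 1/2$ on $[\pi/4,\pi/2]$, on $[\pi/12,\pi/4]\cup[5\pi/12,\pi/2]$, and on $[\pi/20,3\pi/20]\cup[\pi/4,7\pi/20]\cup[9\pi/20,\pi/2]$, whose union is exactly $[\pi/20,\pi/2]$. Hence some run of the first loop succeeds with probability $\ge 1/2$ in at most three queries, and $3=O(\sqrt{2^N/t})$ since $\sqrt{2^N/t}\ge 1$ in this regime. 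When $t\le 2^{N-4}$, I would take $K=2^{\lceil\log_2 t\rceil}$, so that $K/2<t\le K$, giving $\sin\theta\le\sin\theta_K\le\sqrt{2}\sin\theta$ and $\theta_K<\pi/8$; monotonicity of $(\sin x)/x$ on $(0,\pi/2)$ then yields $\theta/\theta_K\ge\sin\theta/\sin\theta_K\ge 1/\sqrt{2}$. Combining with $(2L(\theta_K)+1)\theta_K\in[\pi/2,3\pi/4]$ shows $(2L(\theta_K)+1)\theta\in[\pi/(2\sqrt{2}),3\pi/4]\subset[\pi/4,3\pi/4]$, so this single run succeeds with probability at least $1/2$. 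The queries spent up to and including this step satisfy
\begin{equation*}
3+\sum_{j'=j}^{N-4}L(\theta_{2^{j'}})\le C+C'\sqrt{2^N}\cdot 2^{-j/2}\le C''\sqrt{2^N/t},
\end{equation*}
which matches the claimed bound.

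The main obstacle is the trigonometric bookkeeping in the case $t>2^{N-4}$: one must verify that $L\in\{0,1,2\}$ already covers $[\pi/20,\pi/2]$ and that the threshold $2^{N-4}$ dovetails with the starting value $K=2^{N-4}$ of the second loop so that no gap is left between the two cases. Everything else reduces to the engineered properties of $L(\theta)$ and to geometric-series estimates.
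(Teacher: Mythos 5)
Your proof is correct and follows essentially the same strategy as the paper's: invoke Proposition~\ref{prop:Grover-evolution-of-state}, split on whether $|T|$ exceeds roughly $2^{N-4}$ (first loop succeeds via the $\theta$, $3\theta$, $5\theta$ case analysis) or not (the second loop's iteration with $K\approx|T|$ gives $\theta/\theta_K\ge 1/\sqrt{2}$ and hence $(2L_K+1)\theta\in[\pi/4,3\pi/4]$), and bound the queries by a geometric series. Your explicit interval decomposition for $L\in\{0,1,2\}$ and the use of monotonicity of $(\sin x)/x$ are slightly sharper renderings of steps the paper states more tersely, but the argument is the same.
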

\begin{proof}
    The first for-loop in Algorithm~\ref{alg:Grover-unknown-appendix} involves in total six queries to the classical or quantum oracles, and in the second for-loop for each $K$ the oracles are queried $L(\theta_K)+1 \le 2^{N/2} / \sqrt{K} + 1$ times. Therefore, the total number of queries is $O(2^{N/2})$, and~\eqref{prop:Grover-unknown-item-1} of the proposition is proved.

    To prove~\eqref{prop:Grover-unknown-item-2}, we may assume that $0<|T|<2^N$.  Let $\theta\in(0,\pi/2)$ be such that $\sin\theta = \sqrt{|T| / 2^N}$. Recall that by Proposition~\ref{prop:Grover-evolution-of-state} Algorithm~\ref{alg:Grover-L-appendix} with input $L$ yields a solution with probability $\sin^2 ((2 L+1)\theta)$.
    
    Let $\ell_0$ be the integer for which $2^{\ell_0-1}\le |T| < 2^{\ell_0}$. A calculation shows that if $\ell_0 \ge N-3$, then either $\theta$, $3\theta$, or $5\theta$ is contained in the interval $[\pi/4,3\pi/4]$. Consequently, if $\ell_0 \ge N-3$, then the algorithm outputs a solution and halts with probability at least $1/2$ during the execution of the first for-loop.
  
    Assume that $\ell_0 \le N-4$ and consider $K=2^{\ell_0}$. Then $0<\theta_K < \pi / 8$, and
    \begin{equation*}
        \frac{\sin\theta}{\sin\theta_K}
        = \frac{\sqrt{|T|}}{\sqrt{2^{\ell_0}}} \in \bigg[\frac{1}{\sqrt{2}}, 1\bigg),
    \end{equation*}
    so we can estimate (using $\theta_K > \theta$, $\sin\theta \le \theta$, and $\sin\theta_K \ge \theta_K / \sqrt{2}$)
    \begin{equation*}
        \frac{1}{2} \le \frac{\theta}{\theta_K} < 1.
    \end{equation*}
    Choice~\eqref{eq:Grover-L-theta} guarantees that with $L_K=L(\theta_K)$ it holds that $\pi/2 \le (2L_K+1)\theta_K < 3\pi/4$, so
    \begin{equation*}
        (2L_K+1)\theta
        = (2L_K+1)\theta_K \cdot\frac{\theta}{\theta_K} 
        \in \left[\frac{\pi}{4}, \frac{3\pi}{4}\right].
    \end{equation*}
    Therefore, with probability at least $1/2$ the algorithm outputs a
    solution and halts at the latest when the second for-loop is executed with $K = 2^{\ell_0}$.

    The total number of queries executed up to and including the loop
    corresponding to $K=2^{\ell_0}$ is bounded from the above by
    \begin{equation*}
        6 + \sum_{j=4}^{N-{\ell_0}} \left( \sqrt{ \frac{2^N}{2^{N-j}} } + 1\right)
        \le C \sqrt{\frac{2^N}{2^{\ell_0}}}
        \le C \sqrt{\frac{2^N}{|T|}}.\qedhere
        \end{equation*}
        \end{proof}
\vfill

\section[The OR gate and the iterated AND and OR \newline operators]{The OR gate and the iterated AND and OR operators}
\label{appendix:iterated-and-or}

\begin{algorithm}[H]

\caption{OR operator}\label{gate_or}
\begin{algorithmic}
\Require 
\Register{Controls}{$C_1$ and $C_2$}\\
\Register{Target}{$T$}

\Gate{NOT}{$C_1$}
\Gate{NOT}{$C_2$}
\Gate{NOT}{$T$}
\Gate{CCNOT}{
\Register{Controls}{$C_1$ and $C_2$}\\
\Register{Target}{$T$}
}
\Gate{NOT}{$C_1$}
\Gate{NOT}{$C_2$}
\end{algorithmic}
\end{algorithm}

\begin{algorithm}[H]
\caption{AND${}_{(m)}$ operator}\label{gate_and-n} 
\begin{algorithmic}
\Require 
\Register{Parameter}{$m\ge 3$}\Comment{``Number of input variables''}\\
{\bf Operates on qubits}\\
\Register{Controls}{$C_1,C_2,\dots,C_m$}\\
\Register{Target}{$T$}
\Comment{``Output''}\\
\Register{Ancillae}{$F(k)$, $k = 1,\dots,m-2$}\\

            \Gate{CCNOT}{
\Register{Controls}{$C_1$ and $C_2$}\\
\Register{Target}{$F(1)$}
}
\vspace{2mm}
  \If{$m\ge 4$}        
 \For{$j = 2,\dots,m - 2$}
               \Gate{CCNOT}{
\Register{Controls}{$F(j-1)$ and $C_{j+1}$}\\
\Register{Target}{$F(j)$}
}
        \EndFor 
 \EndIf
 \vspace{2mm}
                    \Gate{CCNOT}{
\Register{Controls}{$F(m-2)$ and $C_m$}\\
\Register{Target}{$T$}
} \vspace{2mm}
       \If{$m\ge 4$} \Comment{``Next we uncompute ancillae''}
         \For{$j = m-2,\dots,2$}
               \Gate{REVERSE-CCNOT}{
\Register{Controls}{$F(j-1)$ and $C_{j+1}$}\\
\Register{Target}{$F(j)$}
}
        \EndFor 
 \EndIf
 \vspace{2mm}
             \Gate{REVERSE-CCNOT}{
\Register{Controls}{$C_1$ and $C_2$}\\
\Register{Target}{$F(1)$}
}
        \end{algorithmic}
\end{algorithm}

\begin{algorithm}[H]
\caption{OR${}_{(m)}$ operator}\label{gate_or-n} 
\begin{algorithmic}
\Require 
\Register{Parameter}{$m\ge 3$}\Comment{``Number of input variables''}\\
{\bf Operates on qubits}\\
\Register{Controls}{$C_1,C_2,\dots,C_m$}\\
\Register{Target}{$T$}
\Comment{``Output''}\\
\Register{Ancillae}{$F(k)$, $k = 1,\dots,m-2$}\\
 \vspace{2mm}
 \For{$j = 1,\dots,m $}
   \Gate{NOT}{$C_j$}  
    \EndFor 
     \vspace{2mm}
      \Gate{AND${}_{(m)}$}{
\Register{Controls}{ $C_1,\ C_2,\dots,C_m$}\\
\Register{Target}{$T$}\\
\Register{Ancillae}{$F(k)$, $k = 1,\dots,m-2$}
}
 \vspace{2mm}
 \Gate{NOT}{$T$}
 \vspace{2mm}
 \For{$j = 1,\dots,m $}
   \Gate{NOT}{$C_j$}
    \EndFor 
        \end{algorithmic}
\end{algorithm}

\printbibliography

\end{document}